\newtheorem{prop}{Proposition}
\newtheorem{lem}{Lemma}
\newtheorem{thrm}{Theorem}
\newtheorem*{thrm*}{Theorem}
\newtheorem{rmk}{Remark}
\theoremstyle{definition}  %italics in theorems
\newtheorem{definition}{Definition}
\renewcommand{\qedsymbol}{$ \blacksquare $}
\renewcommand{\le}{\leqslant}
\renewcommand{\ge}{\geqslant}
\let\intt\int
\renewcommand{\int}{\intt\limits}
\newcommand{\C}{\mathbb{C}} 						% Complex numbers
\newcommand{\D}{\mathbb{D}} 						% Disk
\newcommand{\R}{\mathbb{R}}		                    % Reals
\newcommand{\N}{\mathbb{N}}	                	    % Naturals
\newcommand{\TT}{\mathbb{T}}	 					% Circle
\newcommand{\Z}{\mathbb{Z}}	 					    % Integers
\newcommand{\F}{\mathcal{F}}                        % Fourier transform
\renewcommand{\a}{\alpha}									% alpha | A
\renewcommand{\b}{\beta}									% beta | B
\newcommand{\g}{\gamma}						                % gamma
\newcommand{\G}{\Gamma}		                                % Gamma
\newcommand{\de}{\delta}									% delta
\newcommand{\e}{\varepsilon}								% epsilon | E
\newcommand{\z}{\zeta}										% zeta | Z
\renewcommand{\l}{\lambda}									% lambda
\renewcommand{\L}{\Lambda}									% Lambda
\let\originalnu\nu											% <fixing nu>
\renewcommand{\nu}{\originalnu} 					        % nu | N
\newcommand{\f}{\varphi}									% <another phi>
\DeclareMathOperator{\dist}{dist}							% distance
\DeclareMathOperator{\Hol}{Hol}
\renewcommand{\emptyset}{\varnothing}
\title[Complete interpolating sequences for small Fock spaces]
{A uniform approach to complete interpolating sequences for small Fock spaces with $p > 0$}
\author{Mikhail Mironov}
\address{
\newline \phantom{x}\,\, Mikhail Mironov,
\newline Univ Gustave Eiffel, Univ Paris Est Créteil, CNRS, LAMA UMR8050 F-77447 Marne-la-Vallée, France
\newline {\tt mikhail.mironov2@univ-eiffel.fr}
\smallskip
}
\begin{document}

\begin{abstract} 

We study complete interpolating sequences in two types of small Fock spaces, $\mathcal{F}^p_{\alpha +}$ and $\mathcal{F}^p_{\alpha}$, for $0 < p \le \infty$. One-sided small Fock spaces $\mathcal{F}^p_{\alpha +}$ are well-studied spaces of entire functions with sub-exponential growth, while $\mathcal{F}^p_{\alpha}$ are their two-sided analogue with a symmetric singularity at the origin.

For \emph{one-sided small Fock spaces} $\mathcal{F}^p_{\alpha +}$, we provide a streamlined, perturbation-type description of complete interpolating sequences that unifies and extends earlier results for $1 \le p \le \infty$ to the full range $0 < p \le \infty$. 

For \emph{two-sided small Fock spaces} $\mathcal{F}^p_{\alpha}$, we establish a parallel characterization, revealing a curious periodicity phenomenon: complete interpolating sequences for $\mathcal{F}^p_{\alpha}$ coincide exactly for $p = 1$, $p = 2$, and $p = \infty$, but differ for other $p \ge 1$.

\end{abstract} 

\maketitle

\section{Introduction} 
    In this paper, we consider one-sided and two-sided small Fock spaces. We study the problem of characterizing complete interpolating sequences, which belongs to the broader theory of sampling and interpolation in analytic function spaces. 

    We start by defining one-sided and two-sided small Fock spaces, establish what it means for a sequence to be complete interpolating, and present an appropriate notion of distance in this setting. Next, we state the central result of this paper, a description of complete interpolating sequences, which is in the spirit of the famous 1/4 Kadets–Ingham theorem for the Paley-Wiener space.
    
    \subsection{Sampling and interpolation in Fock-type spaces}
        \begin{definition}
            Suppose $\f : \C \to \R$ is an increasing function on $[0, \infty)$ such that $\f(z) = \f(|z|)$. \emph{The Fock-type space} for this weight $\f$ and $0 < p \le \infty$ is
            \begin{align*}
                \F^{\infty}_{\f} & = \{ f \in \Hol(\C): \| f \|_{\F^{\infty}_{\f}} = \sup_{z \in \C} |f(z)|e^{-\f(z)} < \infty \}, \quad p = \infty, \\
                \F^p_{\f} & = \{ f \in \Hol(\C): \|f\|^p_{\F^p_{\f}} = \int_{\C} |f(z)|^pe^{-p\f(z)} dA(z) < \infty \}, \quad 0 < p < \infty,  
            \end{align*}
            where $dA$ is the two-dimensional Lebesgue measure. 
        \end{definition}
        The classical examples of Fock-type spaces are Bargmann-Fock spaces, obtained for $\f(z) = c |z|^2$. For them, the problems of sampling and interpolation were completely resolved by Seip \cite{SeipBargFock}, and Seip and Wallstén \cite{SeipBargFock2}. In particular, no complete interpolating sequences exist for Bargmann-Fock spaces. It turns out that similar results hold for a much wider class of weights that roughly satisfy $\f(z) \gg \log_+^2|z|$, see \cite{lyubarskii1994sampling, Ortega1995OnInterpolation, marco2003interpolating, borichev2007sampling}. Borichev and Lyubarskii \cite{FockType} showed that complete interpolating sequences for $\F^2_{\f}$ exist exactly for the weights with rates of growth below the threshold $\f(z) = \log^2_+|z|$. Fock-type spaces with such slowly growing weights are referred to as \emph{small Fock spaces}.

    \subsection{One-sided and two-sided small Fock spaces}
        In this paper, we are interested specifically in the critical weight $\f(z) = \a \log^2_+|z|, \ \a > 0$. We call Fock-type spaces with this weight \emph{one-sided small Fock spaces}, and denote them $\F^p_{\a+}$. 
        Complete interpolating sequences for one-sided small Fock spaces were described by Baranov, Dumont, Hartmann and Kellay \cite{SmallFock} for $p = 2, \infty$, and this description was extended to $1 \le p \le \infty$ by Omari \cite{OmariCIS}, also see \cite{baranov2017fock, OmariCISb} for related results. One of the main results of this paper is the simplification of this description, which allows us to extend it to the full range $0 < p \le \infty$. 
    
        When considering the logarithmic weight, it appears to be natural to symmetrize one-sided small Fock spaces. That is, we allow a singularity at the origin. This way, one side refers to the singularity at $\infty$, while the other side refers to the singularity at $0$.
        \begin{definition}
            \emph{The two-sided small Fock space} for $\a > 0$ and $0 < p \le \infty$ is
            \begin{equation*}
                \F^p_{\a} = \{ f \in \Hol(\C \setminus \{ 0 \} ): \|f\|_{\F^p_{\a}} = \| f(z) e^{-\a \log^2|z|} \|_{L^p(\C)} < \infty \}.
            \end{equation*}    
        \end{definition}
        Note that one-sided small Fock spaces are their subspaces (with equivalent norm) that consist of entire functions, and the spaces $\F^p_{\a}$ and $\F^p_{\a+}$ are Banach spaces for $1 \le p \le \infty$, while for $0 < p < 1$ they are F-spaces.
        From now on, by $\f(z)$ we mean either $\a \log^2_+ |z|$ or $\a\log^2|z|$ depending on whether we work in the one-sided or two-sided context.  
    
        Another reason to consider two-sided small Fock spaces is that they are closely related to certain problems in Gabor analysis, see Baranov and Belov \cite{HypCh}. Two-sided small Fock spaces also appear implicitly in the work of Baranov, Belov, and Gröchenig \cite{GaussFrames}, where the authors effectively obtain them by gluing together two copies of one-sided small Fock spaces.

    \subsection{Geometry and complete interpolation}    
        For brevity, we denote $\C_0 = \C \setminus \{ 0 \}$.
        
        We now introduce an appropriate notion of separation in our setting.  
        Observe that $\R \times \TT \cong \C_0$ via $(t, \z) \mapsto e^t \z$. Hence, one can consider a metric $d_{\log}$ on $\C_0$ --- the product metric on $\R \times \TT$, that is, for $z, w \in \C_0$
        \begin{equation*}
            d_{\log} (z, w) = | \log|z| - \log|w| | + \left| \frac{z}{|z|} - \frac{w}{|w|} \right|. 
        \end{equation*}
        \begin{definition}
            We say that $\L \subset \C_0$ is \emph{$d_{\log}$-separated} if there exists $d_{\L} > 0$ such that for any two different $\l, \l' \in \L$
            \begin{equation*}
                d_{\log}(\l, \l') \ge d_{\L}.
            \end{equation*}
        \end{definition}
        Note that for $|z|, |w| \ge 1$ the $d_{\log}$-metric is equivalent to
        \begin{equation*}
            d(z, w) = \frac{|z - w|}{1 + \min(|z|, |w|)}    
        \end{equation*}
        for sufficiently close (in either $d_{\log}$ or $d$) points $z, w$, see \cite[Section 1.2]{SmallFock} for the discussion underlying $d(z,w)$. In particular, away from $0$ the notions of separation for these two coincide. Hence, in the one-sided case we say that a set $\L \subset \C$ is \emph{$d_{\log_+}$-separated} if it is $d_{\log}$-separated outside of any disc centered at $0$ and Euclidean separated inside any such disc.
    
        For a set $\L \subset \C_0$, consider the restriction operator acting on $\F^p_{\a}, \ 0 < p \le \infty$ 
        \begin{equation} \label{eq: R two-sided}
            R_{\L}: f \mapsto \left( f(\l)|\l|^{2/p}e^{-\a \log^2|\l|} \right)_{\l \in \L}.
        \end{equation}
        Similarly, in the one-sided case, for a set $\L \subset \C$ consider the restriction operator on $\F^p_{\a+}, \ 0 < p \le \infty$
        \begin{equation} \label{eq: R one-sided}
            R_{\L}: f \mapsto \left( f(\l)(1+|\l|)^{2/p}e^{-\a \log_+^2|\l|} \right)_{\l \in \L}.
        \end{equation}
        The weights that appear in these operators correspond to the norms of the evaluation functionals on small Fock spaces; we prove it in Subsection \ref{subsect: eval}.
    
        We define the main object of our study.
        \begin{definition}
            We say that $\L \subset \C_0$ (resp. $\C$) is \emph{a complete interpolating sequence for} $\F^p_{\a}$ (resp. $\F^p_{\a+}$) if the restriction operator $R_{\L}$ is an isomorphism between $\F^p_{\a}$ (resp. $\F^p_{\a+}$) and $\ell^p(\L)$.
        \end{definition}

    \subsection{Characterization of complete interpolating sequences}
        Now, we can state the main results of this paper. We provide a uniform description of complete interpolating sequences for one-sided and two-sided small Fock spaces for all $0 < p \le \infty$. This description has a perturbative nature, resembling the famous $1/4$ Kadets–Ingham theorem \cite{kadets1964exact} and Avdonin's theorem \cite[Theorem 1]{avdonin1977riesz} for the Paley-Wiener space. The main difference is that the perturbation-type condition is not only sufficient but also necessary in the case of small Fock spaces, while in the Paley-Wiener setting the complete answer is quite more complicated, see \cite[Theorem 1]{lyubarskii1997complete} for the relevant discussion. 
        
        For brevity, we state the following theorem for sets $\L \subset \C_0$, since moving a single point does not affect the complete interpolating property.
        \begin{thrm} \label{thrm: CIS characterisation for one-sided}
            Let $0 < p \le \infty$. A set $\L \subset \C_0$ is complete interpolating for $\F^p_{\a+}$ if and only if the following three conditions are satisfied:
            \begin{enumerate} [label=(\roman*)]
                \item $\L$ is $d_{\log_+}$-separated. \label{pr: 1 one-sided}
            \end{enumerate}
            In particular, we can enumerate $\L = ( \l_k )_{k \ge 0}$ so that $|\l_k| \le |\l_{k + 1}|$ and write $$\l_k = e^{\frac{k + 2/p + \de_k}{2\a}} e^{i \theta_k}, \ \de_k, \theta_k \in \R.$$
            \begin{enumerate} [label=(\roman*)]
                \addtocounter{enumi}{1}
                \item $(\de_k)_{k \ge 0} \in \ell^{\infty}(\N_0)$ \label{pr: 2 one-sided}
                \item There exists $N \in \N$ and $\e > 0$ such that
                \begin{equation*}
                    \sup_{n \ge 0} \left| \frac{1}{N} \sum_{k = n}^{n + N - 1} \de_k \right| \le \frac{1}{2} - \e.
                \end{equation*} \label{pr: 3 one-sided}
            \end{enumerate}
        \end{thrm}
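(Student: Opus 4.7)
My plan is to reduce the problem to a model situation via a logarithmic change of variable, construct an explicit generating function vanishing on $\L$, and translate conditions \ref{pr: 1 one-sided}--\ref{pr: 3 one-sided} into sharp growth estimates for this function.

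\smallskip
\textbf{Reduction.} The substitution $z = e^{w/(2\a)}$ sends $\C_0$ to a cylinder and transforms the weight $\a \log_+^2|z|$ into (essentially) the one-sided Gaussian $(\Re w)_+^2/(4\a)$, while the canonical points $\l_k^0 = e^{(k + 2/p)/(2\a)}$ become the arithmetic progression $\{k+2/p\}$. The factor $(1+|z|)^{2/p}$ in \eqref{eq: R one-sided} is exactly the Jacobian correction in the associated measure. In this picture condition \ref{pr: 3 one-sided} is literally an Avdonin-type hypothesis on a perturbation $\de_k$ of an integer lattice, which makes the analogy with Kadets--Ingham/Avdonin transparent and guides the whole proof.

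\smallskip
\textbf{Sufficiency via a generating function.} Assume \ref{pr: 1 one-sided}--\ref{pr: 3 one-sided}. I would construct a Weierstrass-type product $G_\L(z) = \prod_k E(z/\l_k, n_k)$ vanishing exactly on $\L$, and compute the reference $G_0$ for $\l_k^0$ in essentially closed form (a theta/sine-like product), so that $|G_0(z)| \asymp (1+|z|)^{-2/p} e^{\f(z)}$ on the shells $|\l_k^0| \le |z| \le |\l_{k+1}^0|$ away from the zeros. The logarithm of $G_\L/G_0$ is governed, up to a bounded error, by partial sums of $\de_k$ over indices with $|\l_k^0| \sim |z|$, and condition \ref{pr: 3 one-sided} translates directly into the two-sided bound
\begin{equation*}
    \left| \log \left| \frac{G_\L(z)}{G_0(z)} \right| \right| \le \left( \tfrac{1}{2} - \e \right) \cdot 2\a \log|z| + O(1).
\end{equation*}
This yields $|G_\L(z)| \lesssim (1+|z|)^{-2/p - \eta} e^{\f(z)}$ on suitable curves and matching lower bounds $|G_\L'(\l_k)| \gtrsim (1+|\l_k|)^{-2/p - \eta} e^{\f(\l_k)}$, whence the Lagrange-type formula $f(z) = \sum_k a_k \, G_\L(z)/((z - \l_k) G_\L'(\l_k))$ gives an explicit right inverse of $R_\L$, and a direct shell-by-shell $L^p$-estimate shows it is bounded $\ell^p(\L) \to \F^p_{\a+}$ for all $0 < p \le \infty$.

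\smallskip
\textbf{Necessity.} If $R_\L$ is an isomorphism, separation \ref{pr: 1 one-sided} follows from the boundedness of $R_\L$ combined with the evaluation-functional norms computed in Subsection \ref{subsect: eval}: two points too close in $d_{\log_+}$ would produce collinear rows and break injectivity uniformly. Condition \ref{pr: 2 one-sided} is a density statement, forcing the counting function $\#\{\l \in \L : |\l| \le r\}$ to equal $2\a \log r + 2/p + O(1)$; this is obtained by testing $R_\L$ and its inverse against normalized reproducing-kernel-type test functions (or peak functions for $p = \infty$), as in the earlier works \cite{SmallFock, OmariCIS}. Finally, given \ref{pr: 1 one-sided}--\ref{pr: 2 one-sided}, the canonical generating function $G_\L$ of the previous step still makes sense, and the boundedness of $R_\L^{-1}$ forces $|G_\L(z)|(1+|z|)^{2/p}e^{-\f(z)}$ to stay bounded on curves between shells with margin $(1+|z|)^{-\e/2}$, which via the identification above is equivalent to \ref{pr: 3 one-sided}.

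\smallskip
\textbf{Main obstacle.} The principal difficulty is the range $0 < p < 1$, where $\F^p_{\a+}$ is only an F-space and the dual-space/Hahn--Banach arguments used in \cite{SmallFock, OmariCIS} for $p \ge 1$ are unavailable. I expect to avoid duality entirely by making the Lagrange formula and its norm estimates explicit and uniform in $p$, so that the same generating-function construction drives both sufficiency and necessity for all $0 < p \le \infty$. Setting up $G_\L$ with the correct $p$-dependent zero density $2/p$ and obtaining the matching two-sided pointwise control --- essentially the same bounds used to exploit \ref{pr: 3 one-sided} --- is where the technical core of the proof will sit.
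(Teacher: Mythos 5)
Your plan correctly identifies the mechanism driving the result --- the Weierstrass product $G_\L$, the estimate of $\log|G_\L/G_0|$ by averaged partial sums of $\de_k$, the Lagrange-type right inverse, and the decision to avoid duality so that $0<p<1$ is handled uniformly --- and in that sense you are on the same route as the paper. However, there is a genuine gap at the step you summarize as ``a direct shell-by-shell $L^p$-estimate shows it is bounded $\ell^p(\L)\to\F^p_{\a+}$''. For a perturbed $\L$ the exponent in the estimate of Lemma~\ref{lem: product estimate fine} is $\s_n=\frac12+\frac2p+\frac1{n+1}\sum_{j\le n}\de_j$ and it \emph{oscillates with $n$}; when you plug the two-sided bounds on $G_\L$ and $G'_\L(\l_k)$ into the Lagrange sum, the ratio $(1+|\l_k|)^{\s_k}/(1+|z|)^{\s_n}$ carries a spurious factor $(1+|\l_k|)^{\s_k-\s_n}$ whose exponent can be as large as $1-\e$ in absolute value, and the naive shell estimate does not close. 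The paper sidesteps exactly this by first proving the interpolating property for the \emph{unperturbed} base sequence $\G$ (where $\s_n$ is constant and your shell argument works verbatim), and then reducing a general $\L$ to $\G$ through the discrete Hilbert-transform-type matrix $T_{mk}$ in \eqref{eq: T definition}; there the oscillating exponents telescope into $\pm2\sum_{j}\de_j$ over the interval between $m$ and $k$, and condition~\ref{pr: 3 one-sided} yields the clean exponential decay $|T_{mk}|\lesssim e^{-\frac{\e}{4\a}|m-k|}$. The same operator $T$ is what makes the \emph{necessity} of~\ref{pr: 3 one-sided} tractable: if~\ref{pr: 3 one-sided} fails, one exhibits arbitrarily long rows or columns of $(T_{mk})$ bounded away from zero, contradicting $\ell^p$-boundedness, which is considerably sharper than your assertion that ``boundedness of $R_\L^{-1}$ forces $|G_\L(z)|(1+|z|)^{2/p}e^{-\f(z)}$ to stay bounded on curves.'' Finally, your argument for necessity of~\ref{pr: 2 one-sided} via ``reproducing-kernel-type test functions'' is not available for $0<p<1$ (no Hilbert-space structure); the paper instead identifies the biorthogonal functions $g_k$ with $G_\L/(G'_\L(\l_k)(\cdot-\l_k))$ via Hadamard factorization and contradicts the evaluation estimate of Lemma~\ref{lem: evaluation estimate} when $\de_k$ is unbounded. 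I would advise inserting the two-step ``prove $\G$ is CIS, then compare via $T$'' structure; without it the interpolation bound and the necessity of~\ref{pr: 3 one-sided} are not actually established by what you have written.
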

        Note that for $p = 2$ and $p = \infty$ we get \cite[Theorems 1.1 and 1.2]{SmallFock}. Moreover, for $1 \le p \le \infty$, after appropriate substitutions we obtain \cite[Theorems 1, 2]{OmariCIS}. In this way, Theorem \ref{thrm: CIS characterisation for one-sided} provides us with a uniform description of complete interpolating sequences for small Fock spaces $\F^p_{\a+}$ for all $1 \le p \le \infty$, and extends this description to $0 < p < 1$ in a natural way.
    
        We prove a similar result for two-sided small Fock spaces.
        \begin{thrm} \label{thrm: CIS characterisation for two-sided}
            Let $0 < p \le \infty$. A set $\L \subset \C_0$ is complete interpolating for $\F^p_{\a}$ if and only if the following three conditions are satisfied:
            \begin{enumerate} [label=(\roman*)]
                \item $\L$ is $d_{\log}$-separated. \label{pr: 1 two-sided}
            \end{enumerate}
            In particular, we can enumerate $\L = ( \l_k )_{k \in \Z}$ so that $|\l_k| \le |\l_{k + 1}|$ and write $$\l_k = e^{\frac{k + 2/p + \de_k}{2\a}} e^{i \theta_k}, \ \de_k, \theta_k \in \R.$$
            \begin{enumerate} [label=(\roman*)]
                \addtocounter{enumi}{1}
                \item $(\de_k)_{k \in \Z} \in \ell^{\infty}(\Z)$ \label{pr: 2 two-sided}
                \item Up to a shift in numeration there exists $N \in \N$ and $\e > 0$ such that 
                \begin{equation*}
                    \sup_{n \in \Z} \left| \frac{1}{N} \sum_{k = n}^{n + N - 1} \de_k \right| \le \frac{1}{2} - \e.
                \end{equation*} \label{pr: 3 two-sided}
            \end{enumerate}
        \end{thrm}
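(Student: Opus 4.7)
The approach is to reduce Theorem~\ref{thrm: CIS characterisation for two-sided} to the already-stated Theorem~\ref{thrm: CIS characterisation for one-sided} by exploiting the $\Z/2$-symmetry of the weight $\a\log^2|z|$ under the involution $z \mapsto 1/z$. Concretely, I would introduce a linear map $J:f(z) \mapsto z^{-\b}f(1/z)$ with $\b$ chosen so that the change of variables $w=1/z$ produces $\|Jf\|_{\F^p_\a} \asymp \|f\|_{\F^p_\a}$. Combined with the Laurent decomposition $f = f_+ + f_-$ into non-negative and negative powers of $z$, this would give a bounded direct-sum isomorphism
\begin{equation*}
    \F^p_\a \;\cong\; \F^p_{\a+} \oplus \F^p_{\a+}, \qquad f \longmapsto (f_+, J f_-).
\end{equation*}
For $p \geq 1$ the boundedness of the underlying Laurent projection is standard, reducing to a Riesz-type projection on a rotation-invariant weighted annulus; for $0 < p < 1$ a more delicate argument via atomic decompositions or reproducing-kernel estimates adapted from the one-sided case is required.

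Given this decomposition, I would split $\L \subset \C_0$ via its enumeration as $\L_+ = \{\l_k : k \geq 0\}$ and $\L_- = \{\l_k : k < 0\}$, argue that the restriction operator $R_\L$ is block-diagonal (up to controllable off-diagonal terms) with respect to $\F^p_\a \cong \F^p_{\a+} \oplus \F^p_{\a+}$, and conclude that $\L$ is complete interpolating for $\F^p_\a$ if and only if both $\L_+$ and $J\L_- := \{1/\l : \l \in \L_-\}$ are complete interpolating for $\F^p_{\a+}$. Theorem~\ref{thrm: CIS characterisation for one-sided} then applies to each half. Tracking the index shift under inversion --- if $\l_k = e^{(k+2/p+\de_k)/(2\a)}e^{i\t_k}$ for $k < 0$, the reindexed sequence $(1/\l_{-k-1})_{k \geq 0}$ has associated deviations of the form $\tilde\de_k = 1 - 4/p - \de_{-k-1}$ --- the conditions (i)--(iii) of Theorem~\ref{thrm: CIS characterisation for two-sided} assemble as follows: $d_{\log}$-separation of $\L$ is equivalent, by the $J$-invariance of the $d_{\log}$-metric, to $d_{\log_+}$-separation of each of $\L_+$ and $J\L_-$; the bilateral $\ell^\infty$ bound splits into two one-sided $\ell^\infty$ bounds; and the bilateral averaging condition combines the two one-sided averaging conditions, with the phrase ``up to a shift in numeration'' absorbing the freedom in choosing the cut between $\L_+$ and $\L_-$ (equivalently, the level of the dividing circle).

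The main obstacle is the first step: establishing the direct-sum decomposition of $\F^p_\a$ with uniform norm control for all $0 < p \leq \infty$, particularly in the quasi-Banach regime $0 < p < 1$ where the standard Riesz projection on $L^p$ of the circle fails to be bounded. A secondary, more bookkeeping-flavoured difficulty is the $p$-dependent affine shift $1 - 4/p$ introduced by inversion. This shift is, incidentally, the source of the periodicity phenomenon noted in the abstract: for $p \in \{1, 2, \infty\}$ the quantity $4/p$ is an even integer and the shift can be absorbed into an integer reindexing of $(\de_k)$, so that the resulting classes of complete interpolating sequences coincide, while for other values of $p$ the shift is fractional and the classes genuinely differ.
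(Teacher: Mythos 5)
Your proposal is the ``gluing'' approach that the paper itself mentions (right after the theorem statement) as a viable route for $1 \le p \le \infty$, citing the Fredholm-theoretic argument of \cite[Theorem 1.3]{GaussFrames}. The paper deliberately does \emph{not} take that route: it instead runs the one-sided proof again in the two-sided setting (canonical-product estimates, a two-sided base sequence $\G$, and a Hilbert-transform-type operator $T$ on $\ell^p(\Z)$), with the single genuinely new ingredient being the proposition that identifies the biorthogonal functions $g_k$ with $(z/\l_k)^M G_\L(z)/(G_\L'(\l_k)(z-\l_k))$ for one universal $M \in \Z$ (via Hadamard factorization of $g_0(e^w)$ and $2\pi i$-periodicity). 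That proposition is precisely what makes ``up to a shift in numeration'' well-defined and unique, and it works verbatim for $0 < p < 1$.

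There are two concrete gaps in your sketch. First, the Laurent splitting $\F^p_\a \cong \F^p_{\a+} \oplus \F^p_{\a+}$ rests on the boundedness of a Riesz-type projection; for $0 < p < 1$ that projection is genuinely unbounded, and your suggestion to patch this via ``atomic decompositions or reproducing-kernel estimates'' is not an argument but a placeholder for the hardest part of the problem. This is the reason the paper avoids the gluing route: the direct proof never needs the projection and hence applies uniformly. Second, the reduction to the one-sided theorem as you state it --- ``$\L$ is CIS for $\F^p_\a$ iff both $\L_+$ and $J\L_-$ are CIS for $\F^p_{\a+}$'' --- is false even for $p = 2$. The restriction operator is not block-diagonal; it is a Fredholm perturbation of the block-diagonal one, so the correct conclusion is only that $R_\L$ is Fredholm, and one still has to compute the index. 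That index computation is not ``bookkeeping'': it is exactly where the integer shift $m$ in Remark \ref{rmk: m shift} enters, and it cannot be absorbed by merely moving the cut between $\L_+$ and $\L_-$. (Your inversion bookkeeping $\tilde\de_k = 1 - 4/p - \de_{-k-1}$ is itself correct, and your observation that for $p\in\{1,2,\infty\}$ the fractional part of $2/p$ vanishes is consistent with the paper's periodicity remark; but that explains the periodicity, not the validity of the reduction.) To make your route rigorous you would need to reproduce the Fredholm argument of \cite{GaussFrames}, verify that it carries over to $p < 1$ despite the failure of the projection, and then prove the index identity; the paper's direct approach sidesteps all three.
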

        \begin{rmk} \label{rmk: m shift}
            Note that instead of saying that there exists a shift in numeration we can explicitly say that there exists $m \in \Z$, $N \in \N$ and $\e > 0$ such that
            \begin{equation*}
                \sup_{n \in \Z} \left| \frac{1}{N} \sum_{k = n + 1}^{n + N} \de_k + m \right| \le \frac{1}{2} - \e.    
            \end{equation*}
            Moreover, such an $m$, if it exists, is unique and, hence, the right shift in numeration is also unique.
        \end{rmk}
        For $1 \le p \le \infty$, one way to prove this theorem is to realize $\F^p_{\a}$ as two copies of $\F^p_{\a+}$ appropriately glued together. This allows one to obtain Theorem \ref{thrm: CIS characterisation for two-sided} from Theorem \ref{thrm: CIS characterisation for one-sided} by the means of Fredholm theory, see the argument in \cite[Theorem  1.3]{GaussFrames}.
        
        In stark contrast to the Paley-Wiener case, there is no obstruction at $p = 1$, and the description even extends to $p > 0$; it also does not break down at $p = \infty$. This phenomenon is quite different from what happens in the Paley-Wiener spaces due to the difference in the behavior of the discrete Hilbert transform, see \cite{eoff1995discrete}. 
    
        The author finds it quite curious that if $0 < p, q \le \infty$ are such that $2/p - 2/q$ is an integer, then, due to Theorem \ref{thrm: CIS characterisation for two-sided}, complete interpolating sequences for $\F^p_{\a}$ and $\F^q_{\a}$ are exactly the same. We see that a certain type of periodicity in $2/p$ takes place in this description of complete interpolating sequences.  
        In particular, a sequence is complete interpolating for $\F^{\infty}_{\a}$ if and only if it is complete interpolating for $\F^{2}_{\a}$ if and only if it is complete interpolating for $\F^{1}_{\a}$.
    
    \subsection*{Notation} 
        $A \lesssim B$ (equivalently $B \gtrsim A$) means that there is a constant $C > 0$ independent of the relevant variables such that $A \le CB$. We write $A \asymp B$ if both $A \lesssim B$ and $A \gtrsim B$.
    
    \subsection*{Organization of the paper} 
        Section \ref{sect: preliminaries} contains preliminary results involving estimates of infinite products and the restriction operator on small Fock spaces. Section \ref{sect: Theorem one-sided} and Section \ref{sect: Theorem two-sided} are devoted to the proofs of Theorem \ref{thrm: CIS characterisation for one-sided} and Theorem \ref{thrm: CIS characterisation for two-sided} respectively. 
        
\section{Preliminaries} \label{sect: preliminaries}
    In this section, we establish several properties of small Fock spaces, as well as provide the necessary estimates for infinite products, which play a crucial role in the proofs of the main results.

    \subsection{Canonical products estimates}
        We start with a general estimate for the canonical products of sequences that grow at least exponentially. It can be found in the proof of \cite[Theorem 1.1]{SmallFock}. We present a proof here for the reader's convenience.
        \begin{lem} \label{lem: product estimates coarse}
            Let $\L \subset \C_0$ be a $d_{\log_+}$-separated sequence, enumerated $(\l_n)_{n \ge 0}$ such that $|\l_{n+1}| \ge |\l_n|$, then the canonical product
            \begin{equation*}
                G_{\L}(z) = \prod_{k \ge 0} \left( 1 - \frac{z}{\l_k} \right) 
            \end{equation*}
            satisfies the estimate
            \begin{equation*}
                |G_{\L}(z)| \asymp \frac{|z - \l_n|}{|\l_n|} \prod_{k = 0}^{n - 1} \left| \frac{z}{\l_k} \right|,
            \end{equation*}
            where $\l_n$ is the $d_{\log}$-closest point to $z$. Moreover, the multiplicative constants in this estimate are uniform as long as we bound the $d_{\log_+}$-separation constant of $\L$ from $0$.
        \end{lem}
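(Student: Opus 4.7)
The plan is to rearrange the product so that the two claimed factors come out explicitly, leaving a residual infinite product that must be shown to be bounded away from $0$ and $\infty$. Using the identity $1 - z/\l_k = -(z/\l_k)(1 - \l_k/z)$ on each factor with $k < n$, I can write
\[
  |G_{\L}(z)| = \frac{|z - \l_n|}{|\l_n|}\prod_{k=0}^{n-1}\left|\frac{z}{\l_k}\right| \cdot P(z), \qquad P(z) := \prod_{k=0}^{n-1}\left|1 - \frac{\l_k}{z}\right|\prod_{k > n}\left|1 - \frac{z}{\l_k}\right|.
\]
It then suffices to show that $P(z) \asymp 1$ with multiplicative constants depending only on the $d_{\log_+}$-separation constant $d_\L$.

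To estimate $\log P(z)$, I would split the indices into a \emph{tail} and a \emph{bulk} part according to the position of $|\l_k|$ relative to $|z|$. For tail indices, where $|\l_k| \le |z|/2$ in the first product or $|\l_k| \ge 2|z|$ in the second, the elementary inequality $\bigl|\log|1-w|\bigr| \le 2|w|$ for $|w| \le 1/2$ bounds the logarithmic contribution by $\sum_{|\l_k| \le |z|/2}|\l_k|/|z| + \sum_{|\l_k| \ge 2|z|}|z|/|\l_k|$. Grouping the $\l_k$ dyadically in modulus and invoking $d_{\log_+}$-separation, which allows only $O(1/d_\L^2)$ points in any annulus of log-width $1$, collapses each sum to a geometric series controlled by a constant depending only on $d_\L$.

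The bulk indices are those $k \ne n$ with $|\l_k| \in (|z|/2, 2|z|)$, and once again $d_{\log_+}$-separation bounds their number. For each such $k$, the triangle inequality together with the minimality of $\l_n$ gives $d_{\log}(\l_k, z) \ge \tfrac12 d_{\log}(\l_k, \l_n) \ge d_\L/2$. The step I expect to be most delicate is converting this lower bound in the composite metric $d_{\log}$ into a lower bound on $|1 - z/\l_k|$ and $|1 - \l_k/z|$: I would split further according to whether the radial or the angular part of $d_{\log}$ dominates, using the identity $|1 - \rho e^{i\theta}|^2 = (1-\rho)^2 + 2\rho(1-\cos\theta)$ with $\rho = |\l_k|/|z| \asymp 1$, to conclude that in either subcase the modulus is $\gtrsim d_\L$. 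A matching upper bound is immediate in the bulk regime, so each bulk logarithm contributes an amount depending only on $d_\L$, and the finitely many of them sum to $O(1)$. The configuration where $z$ sits in a wide gap of moduli — so that $|\l_n|$ is not comparable to $|z|$ — turns out to be the easiest: the minimality of $\l_n$ forces the bulk set to be empty, and only the tail estimate is needed.
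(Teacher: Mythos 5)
Your strategy coincides with the paper's: factor out the claimed main term, reduce to showing that the residual product
\[
P(z)=\prod_{k<n}\Bigl|1-\frac{\l_k}{z}\Bigr|\prod_{k>n}\Bigl|1-\frac{z}{\l_k}\Bigr|
\]
is $\asymp 1$, and control $\log P$ by a geometric tail estimate plus a finite bulk estimate. The paper splits by index offset $|k-n|$ (using that the moduli grow geometrically, so $|z/\l_k|\le(1-\e)^{k-n-N}$ for $k>n+N$), whereas you split by the modulus ratio $|\l_k|/|z|$ with a dyadic count; these are two organizations of the same idea and both hinge on the same two ingredients (geometric decay for far terms, $d_{\log}$-separation for near terms).

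There is, however, a hole in your case analysis. Your tail is $\{k<n:|\l_k|\le|z|/2\}\cup\{k>n:|\l_k|\ge 2|z|\}$ and your bulk is $\{k\ne n:|\l_k|\in(|z|/2,2|z|)\}$; these do not exhaust the index set. Indices $k<n$ with $|\l_k|\ge 2|z|$, and $k>n$ with $|\l_k|\le|z|/2$, belong to neither, and they do occur: because the angular component of $d_{\log}$ can dominate, the $d_{\log}$-closest point $\l_n$ need not be modulus-comparable to $z$. For example, take $\l_0=e^R$, $\l_k=-e^{R+k/2}$ for $k\ge1$, and $z=-e^{R-T}$ with $T$ large: then $n=1$ (since $\l_1$ wins on angle by a margin of $2$), while $k=0$ has $|\l_0|/|z|=e^{T}>2$ and is caught by neither your tail nor your bulk. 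Your closing sentence is therefore backwards: in the wide-gap configuration the bulk set is indeed empty, but it is precisely here that the leftover indices appear, and the term $\log|1-\l_0/z|\approx T$ in the example is not small. To repair this you would need to argue separately that the leftover indices are confined to a bounded $\log$-annulus around $|\l_n|$ (a short triangle-inequality computation does give $\bigl|\log|\l_k|-\log|\l_n|\bigr|\le 2$ for such $k$) \emph{and} that $|\l_n|/|z|$ (resp. $|z|/|\l_n|$) is bounded. The latter is not automatic from the hypotheses as stated, but it is what holds in every application of the lemma in the paper, where $\L$ is always within a bounded perturbation of the geometric base sequence and hence gap-free; notice that the paper's own middle-range bound (``$|1-z/\l_k|$ uniformly bounded away from $\infty$'' for $n+1\le k\le n+N$) quietly relies on the same fact.
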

        \begin{rmk}
            If we let $z \to \l_n$, we get an estimate for the derivative of $G_{\L}$:
            \begin{equation*}
                |G'_{\L}(\l_n)| \asymp \frac{1}{|\l_n|} \prod_{k = 0}^{n - 1} \left| \frac{\l_n}{\l_k} \right|.
            \end{equation*}
        \end{rmk}
        \begin{proof}
            Take the logarithm of $|G_{\L}(z)|$:
            \begin{equation*}
                \log|G_{\L}(z)| = \sum_{k \ge 0} \log \left| 1 - \frac{z}{\l_k} \right|.
            \end{equation*}
            We decompose the sum into three parts $\log|G_{\L}(z)| = A + B + C$, where 
            \begin{equation*}
                A = \sum_{k = 0}^{n - 1} \log \left| 1 - \frac{z}{\l_k} \right|, \qquad B = \log \left| 1 - \frac{z}{\l_n} \right|, \qquad C =  \sum_{k = n + 1}^{\infty} \log \left| 1 - \frac{z}{\l_k} \right|.
            \end{equation*}
            To complete the proof, it suffices to show that
            \begin{equation} \label{eq: A + C - boundedness}
                \left| A + C - \sum_{k = 0}^{n - 1} \log \left| \frac{z}{\l_k} \right| \right|
            \end{equation}
            is uniformly bounded. 
            
            First, we claim that $|C|$ is uniformly bounded. If we fix $\e > 0$, then for any $t: \: 0 \le |t| < 1 - \e$ we have $| \log (1 + t) | \le c_{\e} |t|$ with some $c_{\e} > 1$. To apply this asymptotic to $C$, note that since $\l_n$ is the $d_{\log}$-closest point to $z$, there exist $N > 0$ and $\e > 0$ depending only on the separation constant of $\L$ such that $\left| \frac{z}{\l_k} \right| < (1 - \e)^{k - n - N}, \ k > n + N$. Therefore,
            \begin{equation*}
                |C| \le \sum_{k = n + 1}^{n + N} \left| \log \left| 1 - \frac{z}{\l_k} \right| \right| + c_{\e } \sum_{k = n + N + 1}^{\infty} \left| \frac{z}{\l_k} \right| \le \sum_{k = n + 1}^{n + N} \left| \log \left| 1 - \frac{z}{\l_k} \right| \right| + \frac{c_{\e} (1 - \e)}{\e}.  
            \end{equation*}
            Finally, note that for $n + 1 \le k \le n + N$, $\left| 1 - \frac{z}{\l_k} \right|$ is uniformly bounded away from $0$ and $\infty$. Thus, $|C|$ is uniformly bounded as we claimed. 
    
            Now, the second claim is that 
            \begin{equation*}
                \left| A - \sum_{k = 0}^{n - 1} \log \left| \frac{z}{\l_k} \right| \right|    
            \end{equation*}
            is uniformly bounded. 
            To this end, note that
            \begin{equation*}
                \left| A - \sum_{k = 0}^{n - 1} \log \left| \frac{z}{\l_k} \right| \right| = \left| \sum_{k = 0}^{n - 1} \log \left| 1 - \frac{\l_k}{z} \right| \right|,
            \end{equation*}
            which is uniformly bounded by the same argument that we used for $C$.
    
            We conclude that \eqref{eq: A + C - boundedness} is uniformly bounded and the lemma is proven. 
        \end{proof}
    
        For a sequence $\L$ that resembles a geometric sequence more closely, we can get a finer estimate. The following lemma is essentially a perturbed version of \cite[Lemma 2.6]{FockType}.
        \begin{lem} \label{lem: product estimate fine}
            Let $\L \subset \C_0$ be a $d_{\log_+}$-separated sequence, enumerated $(\l_n)_{n \ge 0}$ such that $|\l_{n+1}| \ge |\l_n|$. Moreover, $\l_n = e^{\frac{n + \de_n}{2 \a}} e^{i \theta_n}, \ \de_n, \theta_n \in \R$ with $(\de_n)_{n \ge 0} \in \ell^{\infty}(\N_0)$. Then we have the estimate
            \begin{equation*}
                |G_{\L}(z)| \asymp \frac{e^{\f(z)}\dist(z, \L)}{(1 + |z|)^{\frac 1 2 + \frac{1}{n + 1} \sum_{k = 0}^{n} \de_k}}, 
            \end{equation*}
            where $\l_n$ is the $d_{\log}$-closest point to $z$ and $\f(z) = \a \log^2_+|z|$. 
        \end{lem}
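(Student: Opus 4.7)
The plan is to derive the finer estimate from the coarse one in Lemma \ref{lem: product estimates coarse} by an explicit algebraic manipulation that exploits the very special form $\log|\l_k| = (k+\de_k)/(2\a)$ of the moduli. The heart of the argument will be a completion-of-squares in the variable $L := \log|z|$, in which the quadratic weight $\f(z) = \a L^2$ emerges naturally from the partial sum $\sum_{k=0}^n \log|\l_k|$.

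I would start by taking logarithms in Lemma \ref{lem: product estimates coarse} to obtain, uniformly in $z$,
\begin{equation*}
\log|G_\L(z)| = \log|z - \l_n| + n L - \sum_{k=0}^{n} \log|\l_k| + O(1).
\end{equation*}
Substituting the hypothesis on $\L$ yields $\sum_{k=0}^n \log|\l_k| = \frac{n(n+1)}{4\a} + \frac{S_n}{2\a}$ with $S_n := \sum_{k=0}^n \de_k$. I would then set $A := n + \tfrac{1}{2} + \tfrac{S_n}{n+1}$ and complete the square,
\begin{equation*}
\a L^2 - A L = \a\Bigl(L - \tfrac{A}{2\a}\Bigr)^{2} - \tfrac{A^2}{4\a}.
\end{equation*}
Two observations close the calculation. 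First, the $d_{\log}$-closeness of $\l_n$ gives $L = (n+\de_n)/(2\a) + O(1)$, while $\tfrac{A}{2\a} = n/(2\a) + O(1)$ thanks to the $\ell^{\infty}$-bound on $(\de_k)$, so the squared term is $O(1)$. Second, a short expansion $A^{2} = n(n+1) + 2 S_n + O(1)$, using the identity $\tfrac{2 n S_n}{n+1} = 2 S_n - \tfrac{2 S_n}{n+1}$, yields $\tfrac{A^{2}}{4\a} = \tfrac{n(n+1)}{4\a} + \tfrac{S_n}{2\a} + O(1)$, matching exactly the constants produced by the coarse estimate. Combining, and using $|z-\l_n|\asymp\dist(z,\L)$ by $d_{\log}$-separation, the claim follows after exponentiation in the regime $|z|\ge 1$, since there $\a L^{2} = \f(z)$ and $(1+|z|)\asymp|z|$.

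For $|z|$ bounded, the index $n$ of the $d_{\log}$-closest point is itself bounded (since $|\l_n|\to\infty$), and both $\f(z)$ and $(1+|z|)^{\tfrac{1}{2}+S_n/(n+1)}$ are $\asymp 1$ uniformly; the target equivalence then reduces to $|G_\L(z)|\asymp\dist(z,\L)$, which follows directly from Lemma \ref{lem: product estimates coarse} because all remaining factors there are $O(1)$ in this range. I expect the main obstacle to be the algebraic bookkeeping in the unbounded regime: one has to verify that the quadratic tail in $L$ and the Cesàro-averaged contribution $\tfrac{S_n}{n+1}$ fit together to produce precisely the right linear-in-$L$ coefficient, with everything beyond $O(1)$ cancelling. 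The matching of the two expansions described above is exactly what makes this cancellation work and is what upgrades the coarse estimate to the sharp exponent $\tfrac{1}{2}+\tfrac{S_n}{n+1}$.
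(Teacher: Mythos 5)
Your argument is correct and essentially mirrors the paper's own proof: starting from Lemma \ref{lem: product estimates coarse}, taking logarithms, substituting $\log|\l_k|=(k+\de_k)/(2\a)$, and completing the square in $\log|z|$ to isolate $\f(z)$ and the linear exponent $\tfrac12+S_n/(n+1)$, with all remainders controlled via $\|\de\|_\infty$ and the $O(1)$ bound on $\log|z|-n/(2\a)$. The only cosmetic difference is that you complete the square about $A/(2\a)$ with $A=n+\tfrac12+S_n/(n+1)$, whereas the paper groups the expression as a square about $n/(2\a)$ plus two explicitly bounded linear terms; you also give slightly more detail on the bounded-$|z|$ regime, which the paper dismisses in one line.
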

        \begin{proof}
            The desired estimate holds for $|z| < r$ with any fixed $r > 0$. Hence, it is enough to prove
            \begin{equation*}
                |G_{\L}(z)| \asymp \frac{e^{\f(z)}\dist(z, \L)}{|z|^{\frac 1 2 + \frac{1}{n + 1} \sum_{k = 0}^{n} \de_k}}, \quad |z| \ge r, 
            \end{equation*}
            where $r > 1$ is chosen so that $n \ge 1$ for any $z: \: |z| \ge r$. Let us proceed to the proof. Lemma \ref{lem: product estimates coarse} gives us
            \begin{equation*}
                |G_{\L}(z)| \asymp \frac{|z - \l_n|}{|\l_n|} \prod_{k = 0}^{n - 1} \left| \frac{z}{\l_k} \right|.
            \end{equation*}
            Since $(\de_n)_{n \ge 0} \in \ell^{\infty}(\N_0)$ we have $\dist(z, \L) \asymp |z - \l_n|$, so that
            \begin{equation*}
                |G_{\L}(z)| \asymp \frac{\dist(z, \L)}{|\l_n|} \prod_{k = 0}^{n - 1} \left| \frac{z}{\l_k} \right|.
            \end{equation*}
            Thus, we need to show
            \begin{equation*}
                \frac{1}{|\l_n|} \prod_{k = 0}^{n - 1} \left| \frac{z}{\l_k} \right| \asymp \frac{e^{\f(z)}}{|z|^{\frac 1 2 + \frac{1}{n + 1} \sum_{k = 0}^{n} \de_k}}, \quad |z| \ge r.
            \end{equation*}
            We take the logarithm on both sides, so it remains to show the boundedness of
            \begin{equation*}
                L = \left| n \log|z| - \sum_{k = 0}^n \log |\l_k| - \f(z) + \frac 1 2 \log|z| + \frac{\log|z|}{n + 1} \sum_{k = 0}^{n} \de_k \right|.
            \end{equation*}
            Indeed,
            \begin{align*}
                L &= \left| n \log|z| - \sum_{k = 0}^n \frac{k + \de_k}{2 \a} - \a \log^2|z| + \frac 1 2 \log|z| + \frac{\log|z|}{n + 1} \sum_{k = 0}^{n} \de_k \right| \\
                &= \left| n \log|z| - \frac{n(n+1)}{4\a} - \frac{1}{2 \a}\sum_{k = 0}^n \de_k - \a \log^2|z| + \frac 1 2 \log|z| + \frac{\log|z|}{n + 1} \sum_{k = 0}^{n} \de_k \right| \\
                % &= \left| - \a \left( \log |z| - \frac{n}{2\a} \right)^2 - \frac{n}{4\a} - \frac{1}{2 \a}\sum_{k = 0}^n \de_k + \frac 1 2 \log|z| + \frac{\log|z|}{n + 1} \sum_{k = 0}^{n} \de_k \right| \\ 
                &= \left| - \a \left( \log |z| - \frac{n}{2\a} \right)^2 + \frac 1 2 \left( \log|z| - \frac{n}{2 \a} \right) + \left( \frac{1}{n + 1} \sum_{k = 0}^{n} \de_k \right) \left( \log|z| - \frac{n + 1}{2 \a} \right) \right|
            \end{align*}
            is bounded, since $\log|z| - \frac{n}{2 \a}$ and $\frac{1}{n + 1} \sum_{k = 0}^{n} \de_k$ are bounded due to $(\de_n)_{n \ge 0} \in \ell^{\infty}(\N_0)$. This completes the proof.
        \end{proof}
        
    \subsection{Evaluation estimates} \label{subsect: eval}
        The following lemma provides crucial estimates on the growth of functions in small Fock spaces. 
    
        \begin{lem} \label{lem: evaluation estimate}
            We have the following estimates for the value at a point
            \begin{equation*}
                |f(z)| \lesssim \frac{e^{\f(z)}}{(1 + |z|)^{2/p}} \|f\|_{\F^p_{\a+}}, \quad f \in \F^p_{\a+}, \ z \in \C,
            \end{equation*}
            \begin{equation*}
                |f(z)| \lesssim \frac{e^{\f(z)}}{|z|^{2/p}} \|f\|_{\F^p_{\a}}, \quad f \in \F^p_{\a}, \ z \in \C_0,
            \end{equation*}
            with $\f(z) = \a \log^2_+|z|$ and $\f(z) = \a \log^2|z|$ respectively.
        \end{lem}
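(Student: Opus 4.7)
The plan is to bypass the naive subharmonicity argument for $|f|^p$, which fails at the scale $r \asymp |z_0|$ needed to produce the $(1+|z_0|)^{-2/p}$ decay: on a Euclidean disc of that radius, the weight $e^{-p\f}$ varies by an unbounded factor $|z_0|^{O(\log|z_0|)}$. The trick is to absorb $\f$ into a holomorphic factor via the identity
\[
\Re\bigl((\log z)^2\bigr) = \log^2|z| - \arg^2 z.
\]
Choosing the principal branch of $\log$ on the slit plane $\C \setminus (-\infty, 0]$, define $g(z) = f(z) e^{-\a (\log z)^2}$. Then $g$ is holomorphic, and
\[
|g(z)| = |f(z)| e^{-\f(z) + \a \arg^2 z},
\]
so on any sector of bounded angular width $|g|$ and $|f|e^{-\f}$ are comparable up to a constant.

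For the two-sided case at $z_0 \in \C_0$, rotational invariance of both the weight $\f$ and Lebesgue measure gives $\|f(e^{i\theta}\cdot)\|_{\F^p_\a} = \|f\|_{\F^p_\a}$, reducing to the case $z_0 > 0$. The disc $D = D(z_0, |z_0|/2)$ then lies in the right half-plane, where $|\arg z| \le \pi/4$, so $e^{p\a \arg^2 z} \le e^{p\a \pi^2/16}$ on $D$. Since $|g|^p$ is subharmonic, the sub-mean value inequality yields
\[
|f(z_0)|^p e^{-p\f(z_0)} = |g(z_0)|^p \le \frac{4}{\pi |z_0|^2} \int_D |g(w)|^p dA(w) \lesssim \frac{1}{|z_0|^2} \int_D |f(w)|^p e^{-p\f(w)} dA(w) \le \frac{\|f\|^p_{\F^p_\a}}{|z_0|^2},
\]
which is exactly the desired estimate after taking $p$-th roots.

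For the one-sided case I would split on the size of $|z_0|$. If $|z_0| \ge 2$, then $D(z_0, |z_0|/2) \subset \{|z| > 1\}$, so $\f_+ = \f$ on $D$ and the previous argument applies verbatim with $1 + |z_0| \asymp |z_0|$. If $|z_0| \le 2$, both $\f_+(z_0)$ and $(1+|z_0|)^{2/p}$ are $O(1)$, so the claim reduces to the crude bound $|f(z_0)| \lesssim \|f\|_{\F^p_{\a+}}$; this follows from the ordinary sub-mean value applied to $|f|^p$ on $D(z_0, 1) \subset D(0,3)$, using that $\f_+$ is bounded on $D(0,3)$.

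The main obstacle is spotting the right holomorphic modification. Once one notices the decomposition $\f(z) = \Re \a (\log z)^2 + \a \arg^2 z$, transferring the weight into a holomorphic factor becomes natural and the argument reduces to sub-mean value at the correct scale $r \asymp |z_0|$. Without this trick, a purely real-analytic disc-based estimate loses a factor of $(\log|z_0|)^{2/p}$, because one is forced to restrict to discs on which the weight is almost constant, namely of radius $\asymp |z_0|/\log|z_0|$.
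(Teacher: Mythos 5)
Your proof is correct, and it takes a more self-contained, explicit route than the paper. The paper does not actually prove Lemma~\ref{lem: evaluation estimate} directly: it states Lemma~\ref{lem: more accurate estimates} and Remark~\ref{rmk: evaluation estimate}, both deferred to the proof of \cite[Lemma 4.1]{borichev2007sampling} (a unit-disc result whose argument is implicitly transferred to the plane), and then says Lemma~\ref{lem: evaluation estimate} follows. The underlying idea in that reference is the same as yours --- absorb the weight into a holomorphic multiplier so that the sub-mean value inequality can be applied on a disc of the correct $d_{\log}$-scale $\asymp |z_0|$ --- but there the multiplier is constructed locally near each $z_0$. Your observation that $\Re\bigl(\a(\log z)^2\bigr) = \f(z) - \a\arg^2 z$, so that $g = f\,e^{-\a(\log z)^2}$ does the job globally on the slit plane, together with the rotation-invariance reduction to $z_0>0$ and the fact that $D(z_0,|z_0|/2)$ lies in the sector $|\arg z|\le\pi/4$, makes the whole thing an elementary one-line application of subharmonicity. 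This is genuinely cleaner than the citation chain, and the split $|z_0|\ge 2$ versus $|z_0|\le 2$ handles the cutoff in $\log_+$ correctly. Two small remarks: you should dispose of $p=\infty$ explicitly (there the bound $|f(z)|\le e^{\f(z)}\|f\|_{\F^\infty}$ is immediate from the definition, and $|z|^{2/p}=1$), and you might record that your argument recovers the full sub-mean value statement of Remark~\ref{rmk: evaluation estimate}, not just the $L^p$-norm bound, by not discarding the integral over $D$ at the last step --- that stronger form is what the paper actually uses elsewhere (e.g.\ to get boundedness of the restriction operator $R_\L$).
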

        The factors that appear in this lemma are exactly the weights in the definition of the restriction operators $R_{\L}$, \eqref{eq: R two-sided}, \eqref{eq: R one-sided}. Moreover, considering $f(z) = z^n$, it can be argued, similarly to \cite[Lemma 2.7]{FockType}, that these expressions are equivalent to the appropriate ``norms'' of the evaluation functionals in small Fock spaces.
    
        Lemma \ref{lem: evaluation estimate} follows from the following more general result. It can be found in a similar form in \cite[Lemma 2.1]{SmallFock} and a more general form on the unit disc with the proof in \cite[Lemma 4.1]{borichev2007sampling}.
    
        \begin{lem} \label{lem: more accurate estimates}
            For sufficiently small $r > 0$, and for any holomorphic function $f$, the following estimates hold: 
            \begin{enumerate}
                \item For any $z_0 \in \C_0$, and any $z \in \C_0$ with $d_{\log}(z, z_0) < r$,
                \begin{equation*}
                    \left| |f(z)|e^{-\f(z)} - |f(z_0)|e^{-\f(z_0)} \right| \lesssim d_{\log}(z, z_0) \max_{d_{\log}(z, w) < 2r} |f(w)e^{-\f(w)}|.
                \end{equation*} 
                \item
                \begin{equation*}
                    |f(z)|e^{-\f(z)} \lesssim \frac{1}{|z|^2} \int_{d_{\log}(w, z) < r} |f(w)|e^{-\f(w)} dA(w).
                \end{equation*}
            \end{enumerate}
        \end{lem}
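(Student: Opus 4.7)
The plan is to reduce both estimates to classical mean-value and Cauchy inequalities for holomorphic functions, after passing to the logarithmic coordinate $\zeta = \log z$. For a fixed $z_0 \in \C_0$, I would choose a single-valued branch of the logarithm in a neighborhood of $z_0$ with $\zeta_0 := \log z_0$ and $\Im\zeta_0 = \arg z_0 \in (-\pi, \pi]$. For $r$ sufficiently small (say $r < 1/2$), the exponential map is injective on the Euclidean disc $\{|\zeta - \zeta_0| < 2r\}$, its image sits in a $d_{\log}$-ball around $z_0$ of radius $\asymp r$, and conversely a $d_{\log}$-ball around $z_0$ of radius $r$ pulls back to a domain comparable to a Euclidean $\zeta$-disc of radius $\asymp r$; throughout, $|\zeta - \zeta_0| \asymp d_{\log}(e^\zeta, z_0)$.

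The key device is the auxiliary holomorphic function
\begin{equation*}
g(\zeta) = f(e^\zeta)\, e^{-\a\zeta^2}.
\end{equation*}
Using $\Re(\zeta^2) = (\Re\zeta)^2 - (\Im\zeta)^2$ one gets $|g(\zeta)| = |f(z)|\, e^{-\f(z)}\, e^{\a(\Im\zeta)^2}$ for $z = e^\zeta$. Since $|\Im\zeta| \le \pi + 2r$ on the disc, the correction $e^{\a(\Im\zeta)^2}$ is bounded above and below by constants depending only on $\a$ and $r$, and therefore $|g(\zeta)| \asymp |f(z)|\, e^{-\f(z)}$ uniformly on the disc. The whole point is that multiplying $f(e^\zeta)$ by the entire function $e^{-\a\zeta^2}$ absorbs the logarithmic Fock weight into a holomorphic exponential, producing an object to which the standard subharmonic and Cauchy machinery applies.

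With $g$ in hand, part (2) follows from the sub-mean-value inequality $|g(\zeta_0)| \le \tfrac{1}{\pi r^2}\int_{|\zeta - \zeta_0| < r} |g(\zeta)|\, dA(\zeta)$. Changing variables $w = e^\zeta$ with Jacobian $dA(\zeta) = |w|^{-2} dA(w)$, and noting that $|w| \asymp |z_0|$ on the small ball (since $|\Re\zeta - \Re\zeta_0| < r$), yields the claimed $|z_0|^{-2}$ factor times an integral over a $d_{\log}$-ball around $z_0$ of comparable radius; after a harmless relabeling of $r$, this is the stated estimate. For part (1), I would combine the Cauchy estimate $|g'(\xi)| \lesssim r^{-1} \max_{|\eta - \zeta_0| < 2r} |g(\eta)|$ on the concentric disc of radius $r$ with the segment bound $|g(\zeta) - g(\zeta_0)| \le |\zeta - \zeta_0|\sup|g'|$, apply the reverse triangle inequality $\bigl||g(\zeta)| - |g(\zeta_0)|\bigr| \le |g(\zeta) - g(\zeta_0)|$, and translate back using $|g| \asymp |f|e^{-\f}$ and $|\zeta - \zeta_0| \asymp d_{\log}(z, z_0)$.

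The main conceptual obstacle, at first sight, is the unbounded oscillation of $\f$: on a Euclidean disc of radius $\rho$ around $z_0$, the weight $\f(w) = \a\log^2|w|$ varies by order $\rho \log|z_0|$, which blows up as $|z_0| \to \infty$, so a naive \emph{freeze $\f$ on a small ball} strategy fails. The substitution $g(\zeta) = f(e^\zeta)\,e^{-\a\zeta^2}$ sidesteps this precisely because in the logarithmic coordinate $\f(z)$ becomes $\a(\Re\zeta)^2$, the real part of the holomorphic quadratic $\a\zeta^2$; completing that quadratic leaves only the bounded error $e^{\a(\Im\zeta)^2}$, which is controlled uniformly once we fix a local branch with $|\Im\zeta_0| \le \pi$ and take $r$ sufficiently small.
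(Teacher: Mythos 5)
Your approach is correct and is essentially the one behind the lemma: the paper itself does not supply a proof but defers to \cite[Lemma~4.1]{borichev2007sampling}, whose argument is precisely the ``absorb the weight locally into a holomorphic exponential and apply subharmonicity/Cauchy'' device you describe. Passing to the logarithmic coordinate $\zeta = \log z$ makes the harmonic approximant explicit, since $\f(e^\zeta) = \a(\Re\zeta)^2$ and $\Re(\a\zeta^2) = \a(\Re\zeta)^2 - \a(\Im\zeta)^2$, with the residual $\a(\Im\zeta)^2$ uniformly bounded once a branch with $|\Im\zeta_0| \le \pi$ is fixed and $r < \pi/2$. Part~(2) is complete as written: the sub-mean-value inequality for $|g|$, the Jacobian $dA(\zeta) = |w|^{-2}\,dA(w)$, and the comparison $|w| \asymp |z_0|$ on the small disc all check out.

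There is, however, a small gap in the final step of part~(1). You have $|g(\zeta)| = |f(z)|e^{-\f(z)}\,c(\zeta)$ with $c(\zeta) = e^{\a(\Im\zeta)^2}$, and propose to ``translate back using $|g| \asymp |f|e^{-\f}$.'' A two-sided pointwise bound on $c$ is not by itself enough to convert the Cauchy bound on $\bigl||g(\zeta)| - |g(\zeta_0)|\bigr|$ into the desired bound on $\bigl||f(z)|e^{-\f(z)} - |f(z_0)|e^{-\f(z_0)}\bigr|$: writing
\begin{equation*}
    |f(z)|e^{-\f(z)} - |f(z_0)|e^{-\f(z_0)} = \frac{|g(\zeta)| - |g(\zeta_0)|}{c(\zeta)} + |g(\zeta_0)|\left(\frac{1}{c(\zeta)} - \frac{1}{c(\zeta_0)}\right),
\end{equation*}
the second term is not controlled by boundedness of $c$ alone. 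The fix is immediate --- $c$ is Lipschitz on the disc with constant depending only on $\a$ and $r$ (because $|\Im\zeta| \le \pi + 2r$), so $|1/c(\zeta) - 1/c(\zeta_0)| \lesssim |\Im\zeta - \Im\zeta_0| \le |\zeta - \zeta_0| \asymp d_{\log}(z,z_0)$, which is of the right order. Cleaner still is to recenter the parabola: taking $g(\zeta) = f(e^\zeta)\,e^{-\a(\zeta - i\Im\zeta_0)^2}$ gives $c(\zeta_0) = 1$ exactly and $c(\zeta) = 1 + O(|\zeta - \zeta_0|^2)$, a second-order correction. Either way the argument closes, but as written the translation step is not yet justified.
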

        The proof in \cite[Lemma 4.1]{borichev2007sampling} automatically implies the following.
        \begin{rmk} \label{rmk: evaluation estimate}
            Let $0 < p < \infty$ be arbitrary. For sufficiently small $r > 0$ and any holomorphic function $g$
            \begin{equation*}
                |g(z)|^p e^{-p\f(z)} \lesssim \frac{1}{|z|^2} \int_{d_{\log}(w, z) < r} |g(w)|^pe^{-p\f(w)} dA(w).
            \end{equation*}
        \end{rmk}
        Lemma \ref{lem: evaluation estimate} follows from this remark.    

\section{Proof of Theorem \ref{thrm: CIS characterisation for one-sided}} \label{sect: Theorem one-sided}
    The general idea is that the main arguments from \cite[Theorems 1.1]{SmallFock} and \cite[Theorem 1]{OmariCIS} remain valid even for $0 < p < 1$. First, we prove the sufficiency of the conditions \ref{pr: 1 one-sided}, \ref{pr: 2 one-sided}, \ref{pr: 3 one-sided} and, second, we prove their necessity. Before proceeding to the proof, recall that in the one-sided context $\f(z)$ stands for $\a \log^2_+|z|$.

    \subsection{Sufficiency}
        We need to prove that $\L$ satisfying properties \ref{pr: 1 one-sided}, \ref{pr: 2 one-sided}, \ref{pr: 3 one-sided} is complete interpolating for $\F^p_{\a+}$. We first prove the simpler ``complete'' part and then tackle the more complicated ``interpolating'' part.
        \subsubsection{Completeness}
            Let us show that any $\L$ that satisfies properties \ref{pr: 1 one-sided}-\ref{pr: 3 one-sided} is a uniqueness set for $\F^p_{\a+}$. We prove this by contradiction. 
            Suppose that there is a non-zero function $f \in \F^p_{\a+}$ such that $f|_{\L} = 0$. That implies a factorization $f(z) = h(z)G_{\L}(z)$ with some entire function $h(z)$ and the canonical product
            \begin{equation*}
                G_{\L}(z) = \prod_{k \ge 0} \left( 1 - \frac{z}{\l_k} \right).
            \end{equation*}
            Lemma \ref{lem: product estimate fine} gives us an estimate for $|G_{\L}|$ from below
            \begin{equation*}
                |G_{\L}(z)| \gtrsim \frac{e^{\f(z)} \dist(z, \L)}{(1 + |z|)^{1 + 2/p - \e}}.
            \end{equation*}
            Then, estimating $f$ with Lemma \ref{lem: evaluation estimate}, we get
            \begin{equation*}
                |h(z)| \lesssim \frac{e^{\f(z)}}{(1+|z|)^{2/p}} \frac{(1 + |z|)^{1 + 2/p - \e}}{e^{\f(z)} \dist(z, \L)} = \frac{(1 + |z|)^{1 - \e}}{\dist(z, \L)}.
            \end{equation*}
            We can consider contours $C_n$, $n \ge 0$ around $0$ such that $C_n \cap e^n \D = \emptyset$ on which an estimate $\dist(z, \L) \gtrsim 1 + |z|$ holds. Hence, for a fixed $z \in \C$, by the maximum principle, for large enough $n$ we get
            \begin{equation*}
                |h(z)| \lesssim \max_{w \in C_n} \frac{(1 + |w|)^{1 - \e}}{\dist(w, \L)} \lesssim e^{-\e n}.
            \end{equation*}
            Letting $n$ tend to $\infty$, we conclude that $h(z) = 0$, which implies $f = 0$, a contradiction. 
        
        \subsubsection{Base sequence}
            Before going into the proof of the interpolating property for an arbitrary $\L$, let us analyze the base unperturbed sequence
            \begin{equation*}
                \G = ( \g_k )_{k \ge 0} = \left( e^{\frac{k + 2/p}{2\a}} \right)_{k \ge 0}.
            \end{equation*}
            The respective generating function is
            \begin{equation*}
                G_{\G}(z) = \prod_{k \ge 0} \left( 1 - \frac{z}{\g_k} \right).
            \end{equation*}
            \begin{prop} \label{prop: Gamma CIS one-sided}
                The sequence $\G$ is complete interpolating for $\F^p_{\a+}$. Moreover, it stays complete interpolating even if we change the phases of $( \g_k )_{k \ge 0}$ in any way.
            \end{prop}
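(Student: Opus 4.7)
My plan is to establish both the completeness and interpolation parts of the complete-interpolating property, and then observe that every estimate depends only on the moduli $|\g_k|$, which gives the phase-invariance statement for free. Since $\de_k = 2/p$ identically for the base sequence, Lemma \ref{lem: product estimate fine} specializes to the sharp two-sided bound
\[
|G_\G(z)| \asymp \frac{e^{\f(z)} \dist(z, \G)}{(1+|z|)^{1/2 + 2/p}}, \qquad |G'_\G(\g_k)| \asymp \frac{e^{\f(\g_k)}}{(1+\g_k)^{1/2 + 2/p}}.
\]
The boundedness $\|R_\G f\|_{\ell^p} \lesssim \|f\|_{\F^p_{\a+}}$ for a $d_{\log_+}$-separated $\G$ is a standard sampling-type estimate obtained from Remark \ref{rmk: evaluation estimate} combined with finite overlap of the $d_{\log}$-balls around the $\g_k$'s, so it only remains to prove injectivity of $R_\G$ and surjectivity with a bounded right inverse.

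For \emph{injectivity} (completeness), if $f \in \F^p_{\a+}$ vanishes on $\G$, I factor $f = h \cdot G_\G$ with $h$ entire; combining Lemma \ref{lem: evaluation estimate} with the lower bound on $|G_\G|$ gives $|h(z)| \lesssim (1+|z|)^{1/2}/\dist(z, \G)$. On the intermediate circles $C_n$ of radii $e^{(n - 1/2 + 2/p)/(2\a)}$ midway between consecutive $\g_k$'s one has $\dist(z, \G) \gtrsim 1 + |z|$, hence $|h(z)| \lesssim (1+|z|)^{-1/2} \to 0$ on $C_n$; the maximum principle forces $h \equiv 0$. For the \emph{right inverse}, I introduce the Lagrange-type series
\[
T(a)(z) = \sum_{k \ge 0} \frac{a_k (1+\g_k)^{-2/p} e^{\f(\g_k)}}{G'_\G(\g_k)} \cdot \frac{G_\G(z)}{z - \g_k}, \qquad (a_k) \in \ell^p,
\]
for which $T(a)(\g_j) = a_j (1+\g_j)^{-2/p} e^{\f(\g_j)}$, so $R_\G \circ T = \Id$ formally. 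Substituting the estimates above yields the pointwise bound
\[
|T(a)(z)|e^{-\f(z)} \lesssim \frac{\dist(z, \G)}{(1+|z|)^{1/2 + 2/p}} \sum_{k \ge 0} \frac{|a_k|(1+\g_k)^{1/2}}{|z - \g_k|},
\]
from which uniform convergence on compacta follows at once, so $T(a)$ is entire.

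The main technical step, and the main obstacle, is the $L^p$ estimate $\|T(a)\|_{\F^p_{\a+}} \lesssim \|a\|_{\ell^p}$. I will partition $\C$ into annuli $D_n = \{\g_n \le |z| < \g_{n+1}\}$ with $|D_n| \asymp \g_n^2$, and on each $D_n$ split the inner sum into the close indices $k \in \{n, n+1\}$ and the far indices $k \notin \{n, n+1\}$. The close terms are pointwise bounded by $(|a_n| + |a_{n+1}|)/\g_n^{2/p}$ using $\dist(z, \G) \le |z - \g_k|$, and integrated over $D_n$ and summed they contribute $\lesssim \|a\|_{\ell^p}^p$. For the far terms, the geometric spacing $\g_k/\g_n = e^{(k-n)/(2\a)}$ together with $|z - \g_k| \asymp \max(\g_n, \g_k)$ reduces the inner sum to the discrete convolution $\g_n^{-1/2}(|a| * q)_n$ with geometric kernel $q_k = e^{-|k|/(4\a)}$, producing a pointwise bound $\lesssim (|a| * q)_n / \g_n^{2/p}$ on $D_n$. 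Summing over $n$ requires $\sum_n ((|a|*q)_n)^p \lesssim \|a\|_{\ell^p}^p$: for $p \ge 1$ this follows from Young's inequality $\ell^1 * \ell^p \hookrightarrow \ell^p$, while for $0 < p < 1$ I invoke the $p$-subadditivity $(|a|*q)_n^p \le (|a|^p * q^p)_n$ combined with $q \in \ell^p$. Finally, the phase invariance is immediate: replacing each $\g_k$ by $\g_k e^{i\theta_k}$ does not affect the estimates of Lemma \ref{lem: product estimate fine} (which depend only on $|\g_k|$) nor any other step above, so the same argument applies verbatim.
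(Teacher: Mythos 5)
Your argument is correct, and the broad strategy (Lagrange interpolation series with generating function $G_\G$, using Lemma~\ref{lem: product estimate fine} to estimate $G_\G$ and $G'_\G$) matches the paper. Where you diverge is in the key $L^p$ stability estimate $\|T(a)\|_{\F^p_{\a+}} \lesssim \|a\|_{\ell^p}$. The paper fixes $k$, estimates the single-term integral $I_k = \int_\C |c_k|^p(1+|\g_k|)^{-2}e^{p\f(\g_k)}|g_k(z)|^pe^{-p\f(z)}\,dA(z)$ by splitting $\C$ at $|z| = |\g_k|/2$, and then sums over $k$: for $0 < p < 1$ this is permitted by $p$-subadditivity of the norm, while for $1 \le p < \infty$ the paper first proves $\sum_k \frac{\dist(z,\G)(1+|\g_k|)^{1/2}}{(1+|z|)^{1/2}|z-\g_k|} \asymp 1$ and then applies Jensen's inequality with these weights before summing the $I_k$'s. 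You instead partition the $z$-plane into annuli $D_n$ between consecutive $|\g_k|$'s, show the pointwise bound on $D_n$ reduces to a discrete convolution $(|a| * q)_n$ with the geometric kernel $q_k = e^{-|k|/(4\a)}$, and close with Young's inequality (for $p \ge 1$) or $p$-subadditivity of the convolution (for $0 < p < 1$). Your route is arguably more uniform across exponents: the geometric decomposition and convolution reduction are identical for all $p$ and only the final sequence-space inequality changes; it also makes the exponential-decay structure — the same structure exploited later via the matrix $T_{mk}$ in the general interpolation argument — visible already in the base-sequence case. The paper's per-$k$ approach is more self-contained in the $p < 1$ case (no reference to convolution inequalities), at the cost of treating $p < 1$ and $p \ge 1$ by rather different mechanisms (direct subadditivity vs.\ Jensen). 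One small gap in your write-up: the annuli $D_n = \{\g_n \le |z| < \g_{n+1}\}$ for $n \ge 0$ miss the bounded disc $\{|z| < \g_0\}$; this is harmless (absorb it into $D_0$, say, or handle it separately since the integral over a fixed compact set is trivially controlled), but it should be stated. You also reprove completeness for $\G$ directly, whereas the paper simply observes that $\G$ is the special case $\de_k \equiv 0$ of the already-proven uniqueness argument; your direct maximum-principle argument is fine and essentially the same computation.
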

            \begin{proof}
                Note that we have already proven the fact that $\G$ is a uniqueness set for $\F^p_{\a+}$, since $\G$ is $\L$ with all $\de_k = 0$. Hence, we need to prove that $\G$ is an interpolating sequence for $\F^p_{\a+}$.
            
                For $k \ge 0$ set
                \begin{equation*}
                    g_k(z) = \frac{G_{\G}(z)}{G'_{\G}(\g_k)(z - \g_k)}.
                \end{equation*}
                Suppose that we have a sequence $(c_k)_{k \ge 0} \in \ell^p(\N_0)$. We claim that 
                \begin{equation} \label{eq: interpolation formula for gamma}
                    f(z) = \sum_{k \ge 0} c_k (1+|\g_k|)^{-2/p}e^{\f(\g_k)} g_k(z)
                \end{equation}
                belongs to $\F^p_{\a+}$ and interpolates this sequence $c$ in the sense that $(R_{\G} f)(\g_k) = c_k, \ k \ge 0$. We prove this claim. First, we estimate $|G_{\G}|$ using Lemma \ref{lem: product estimate fine}:
                \begin{equation} \label{eq: Generating for Gamma estimate}
                    |G_{\G}(z)| \asymp \frac{e^{\f(z)} \dist(z, \G)}{(1 + |z|)^{1/2 + 2/p}}, \quad z \in \C,
                \end{equation}
                and, in particular,
                \begin{equation} \label{eq: Generating for Gamma derivates}
                    |G'_{\G}(\g)| \asymp \frac{e^{\f(\g)}}{(1 + |\g|)^{1/2 + 2/p}}, \quad \g \in \G.
                \end{equation}
                From \eqref{eq: Generating for Gamma derivates} it follows that the series \eqref{eq: interpolation formula for gamma} converges absolutely and is bounded for bounded $z$, hence $f$ is a well-defined entire function. Moreover, from the definition of $g_k, \ k \ge 0$ we conclude $f(\g_k) = c_k$. Thus, it remains to show that $f \in \F^p_{\a+}$. For $p \ge 1$, this was shown in \cite[Theorem 2.6]{OmariCIS}. Here, we first present the argument for $0 < p < 1$ and then for the sake of completeness provide a short proof for $p \ge 1$. For $0 < p < 1$ we can use sub-additivity $|x + y|^p \le |x|^p + |y|^p$ to get an estimate
                \begin{equation*}
                    \|f\|^p_{\F^p_{\a+}} = \int_{\C} |f(z)|^p e^{-p\f(z)} dA(z) \le \sum_{k \ge 0} |c_k|^p \int_{\C} (1+|\g_k|)^{-2}e^{p\f(\g_k)} |g_k(z)|^p e^{-p\f(z)} dA(z).
                \end{equation*}
                Thus, to show $f \in \F^p_{\a+}$ it is sufficient to prove
                \begin{equation*}
                    I_k :=  \int_{\C} (1+|\g_k|)^{-2}e^{p\f(\g_k)} |g_k(z)|^p e^{-p\f(z)} dA(z) \lesssim 1.    
                \end{equation*}
                Using \eqref{eq: Generating for Gamma estimate} and \eqref{eq: Generating for Gamma derivates} to estimate $g_k(z)$ we get
                \begin{equation} \label{eq: estimate on g_k}
                    |g_k(z)| \asymp \frac{\dist(z, \G)e^{\f(z)}}{(1 + |z|)^{1/2 + 2/p} |z - \g_k|} \frac{(1 + |\g_k|)^{1/2 + 2/p}}{e^{\f(\g_k)}},
                \end{equation}
                hence,
                \begin{equation*}
                    (1+|\g_k|)^{-2}e^{p\f(\g_k)} |g_k(z)|^p e^{-p\f(z)} \asymp 
                    \frac{\dist(z, \G)^p (1 + |\g_k|)^{p/2}}{(1 + |z|)^{p/2 + 2} |z - \g_k|^p},
                \end{equation*}
                which implies
                \begin{equation*}
                    I_k \asymp \int_{\C} \frac{\dist(z, \G)^p (1 + |\g_k|)^{p/2}}{(1 + |z|)^{p/2 + 2} |z - \g_k|^p} dA(z).
                \end{equation*}
                To estimate this integral, we partition the complex plane $\C$ into two parts $D_1 = \{z: |z| < |\g_k|/2 \}$, $D_2 = \{z: |\g_k|/2 \le |z| \}$, which decomposes $I_k$ into the sum of respective integrals $I_k = I_k^{(1)} + I_k^{(2)}$. We then estimate each of them separately:
                \begin{align*}
                    I_k^{(1)} &\asymp \int_{|z| < |\g_k|/2} \frac{(1 + |z|)^p (1 + |\g_k|)^{p/2}}{(1 + |z|)^{p/2 + 2} (1 + |\g_k|)^p} dA(z) \asymp (1 + |\g_k|)^{-p/2} \int_0^{|\g_k|/2} (1 + r)^{p/2 - 2} r dr \lesssim 1, \\
                    I_k^{(2)} &\lesssim \int_{|\g_k|/2 \le |z|} \frac{(1 + |\g_k|)^{p/2}}{(1 + |z|)^{p/2 + 2}} dA(z) \asymp (1 + |\g_k|)^{p/2} \int_{|\g_k|/2}^{\infty} (1 + r)^{-p/2 - 2} r dr \lesssim 1. 
                \end{align*} 
                We conclude that $I_k \lesssim 1$, and so $f \in \F^p_{\a+}$, which completes the proof for the case $0 < p < 1$.

                Now, let us provide an outline for the proof in the case $p \ge 1$. Since we do not have sub-additivity, the approach is slightly different. We bound $|f(z)|$ given by \eqref{eq: interpolation formula for gamma}, using the estimate on $|g_k(z)|$ \eqref{eq: estimate on g_k} that is valid for all $p \ge 0$:
                \begin{equation} \label{eq: estimate on f p > 1}
                    |f(z)|e^{-\f(z)} \lesssim  \sum_{k \ge 0} |c_k|  \frac{\dist(z, \G)(1 + |\g_k|)^{1/2}}{(1 + |z|)^{1/2 + 2/p} |z - \g_k|} ,
                \end{equation}
                We claim that
                \begin{equation} \label{eq: weight equivalent to 1}
                    \sum_{k \ge 0} \frac{\dist(z, \G)(1 + |\g_k|)^{1/2}}{(1 + |z|)^{1/2} |z - \g_k|} \asymp 1.
                \end{equation}
                The fact that this sum is $\gtrsim 1$ is evident by considering the $d_{\log}$-closest to $z$ point $\g_n$ and taking only the $k = n$ term. Then, the bound $\lesssim 1$ follows from $\dist(z, \G) \lesssim 1 + |z|$ and considering separately the sum for $k \le n$ and $k > n$, estimating $|z - \g_k| \asymp 1 + |z|$ and $1 + |\g_k|$, respectively. For $p = \infty$, \eqref{eq: weight equivalent to 1} directly gives $f \in \F^p_{\a+}$. For $1 \le p < \infty$, we use Jensen's inequality to estimate $|f(z)|^p$:
                \begin{equation*}
                    |f(z)|^pe^{-p\f(z)} \lesssim \sum_{k \ge 0} |c_k|^p (1 + |z|)^{-2} \frac{\dist(z, \G)(1 + |\g_k|)^{1/2}}{(1 + |z|)^{1/2} |z - \g_k|},
                \end{equation*}
                from where we get
                \begin{equation*}
                    \|f\|^p_{\F^p_{\a+}} = \int_{\C} |f(z)|^pe^{-p\f(z)} dA(z) \lesssim \sum_{k \ge 0} |c_k|^p \int_{\C } \frac{\dist(z, \G)(1 + |\g_k|)^{1/2}}{(1 + |z|)^{5/2}|z - \g_k|}dA(z).
                \end{equation*}
                Thus, if we define
                \begin{equation*}
                    I_k = \int_{\C } \frac{\dist(z, \G)(1 + |\g_k|)^{1/2}}{(1 + |z|)^{5/2} |z - \g_k|}dA(z),
                \end{equation*}
                it is sufficient to prove $I_k \lesssim 1$. The idea is the same as before; we partition the complex plane $\C$ into two parts $D_1 = \{z: |z| < |\g_k|/2 \}$, $D_2 = \{z: |\g_k|/2 \le |z| \}$, and estimate $\frac{\dist(z, \G)}{|z - \g_k|} \lesssim \frac{1 + |z|}{1 + |\g_k|}$ on $D_1$ and $\frac{\dist(z, \G)}{|z - \g_k|} \lesssim 1$ on $D_2$. Integrating separately over $D_1$ and $D_2$ with these estimates we get $I_k \lesssim 1$. This completes the proof in the case $1 \le p \le \infty$.

                We have proven that $\G$ is complete interpolating for $\F^p_{\a+}$. Finally, note that we are free to add any phases to $\g_k$ without affecting the proof.            
            \end{proof}

        \subsubsection{Interpolation} 
            Now, let us prove that an arbitrary $\L$ satisfying properties \ref{pr: 1 one-sided}-\ref{pr: 3 one-sided} is also interpolating for $\F^p_{\a+}$. Notice that since $\L$ is $d_{\log_+}$-separated, by Remark \ref{rmk: evaluation estimate} the restriction operator $R_{\L}: \F^p_{\a+} \to \ell^p(\L)$ is bounded. Consider 
            \begin{equation*} 
                g_k(z) = \frac{G_{\L}(z)}{G'_{\L}(\l_k)(z - \l_k)}.        
            \end{equation*}
            By Lemma \ref{lem: product estimate fine} and property \ref{pr: 3 one-sided} we have
            \begin{equation*}
                \frac{e^{\f(z)} \dist(z, \L)}{(1 + |z|)^{1 + 2/p - \e}} \lesssim |G_{\L}(z)| \lesssim \frac{e^{\f(z)} \dist(z, \L)}{(1 + |z|)^{2/p + \e}}, \quad z \in \C,    
            \end{equation*}  
            \begin{equation} \label{eq: estimate on G' Lambda}
                e^{\f(\l)} (1 + |\l|)^{-1 - 2/p + \e} \lesssim |G'_{\L}(\l)| \lesssim e^{\f(\l)} (1 + |\l|)^{- 2/p - \e}, \quad \l \in \L.     
            \end{equation}
            which implies that for a fixed $k \ge 0$ we can estimate $|g_k(z) e^{-\f(z)}| \lesssim (1 + |z|)^{-2/p - \e}, \ z \in \C$. In particular, $g_k \in \F^p_{\a+}$. This means that we can interpolate finite sequences $c \in \ell^p(\N_0)$ by finite linear combinations of functions $g_k$ as follows:
            \begin{equation*}
                f(z) = \sum_{k \ge 0} c_k (1+|\l_k|)^{-2/p}e^{\f(\l_k)} g_k(z).
            \end{equation*}
            Suppose that we can prove stability on finite sequences, i.e., $\|f\| \lesssim \|c\|$. Then by passing to the limit, which exists and is an entire function by \eqref{eq: estimate on G' Lambda}, and using Fatou's lemma, we get the desired interpolation property for $\L$. 
            
            Now, let us prove stability on finite sequences. The main idea is to use the already proven Proposition \ref{prop: Gamma CIS one-sided} which says that $\G$ is complete interpolating for $\F^p_{\a+}$. This means that instead of proving $\|f\| \lesssim \|c\|$ for finite sequences $c$, we can prove $\|R_{\G}f\| \lesssim \|c\|$ for finite sequences $c$. This is equivalent to proving the continuity of the Hilbert transform-type operator $T: \ell^p(\N_0) \to \ell^p(\N_0)$
            \begin{equation} \label{eq: T definition}
                (Tc)_m = \sum_{k \ge 0} \left( \frac{1+|\g_m|}{1+|\l_k|} \right)^{2/p} e^{\f(\l_k) - \f(\g_m)} \frac{G_{\L}(\g_m)}{G'_{\L}(\l_k)(\g_m - \l_k)} c_k = \sum_{k \ge 0} T_{mk} c_k, 
            \end{equation}
            where $(T_{mk})_{m, k \ge 0}$ is the matrix corresponding to $T$. To understand $T$, we consider the matrix elements.
            \begin{equation*}
                |T_{mk}| \asymp \left| \frac{\g_m}{\l_k} \right|^{2/p} e^{\f(\l_k) - \f(\g_m)} \left| \frac{G_{\L}(\g_m)}{G'_{\L}(\l_k)(\g_m - \l_k)} \right|.
            \end{equation*}
            It follows from Lemma \ref{lem: product estimates coarse} and property \ref{pr: 2 one-sided} that 
            \begin{align*}
                |G_{\L}(\g_m)| & \asymp \frac{\dist(\g_m, \L)}{|\g_m|} \prod_{j = 0}^{m - 1} \frac{|\g_m|}{|\l_j|}, \\
                |G'_{\L}(\l_k)| & \asymp \frac{1}{|\l_k|} \prod_{j = 0}^{k - 1} \frac{|\l_k|}{|\l_j|}.
            \end{align*}
            Thus, we estimate
            \begin{equation*}
                |T_{mk}| \asymp e^{\f(\l_k) - \f(\g_m)} \left| \frac{\g_m}{\l_k} \right|^{2/p - 1} \frac{\dist(\g_m, \L)}{|\g_m - \l_k|} \prod_{j = 0}^{m - 1} \frac{|\g_m|}{|\l_j|} \prod_{j = 0}^{k - 1} \frac{|\l_j|}{|\l_k|}.
            \end{equation*}
            Since $\L$ is fixed, let us choose phases for $\G$ in such a way that $\dist(\g_m, \L) \asymp |\g_m|$. Then we get
            \begin{equation} \label{eq: Tmk estimate}
                |T_{mk}| \asymp 
                \begin{cases}
                    1, \quad m = k, \\
                    e^{\frac{1}{4 \a} \left( -(m - k) - 2 \sum_{j = k}^{m - 1} \de_j \right)}, \quad m > k, \\
                    e^{\frac{1}{4 \a} \left( -(k - m) + 2 \sum_{j = m}^{k - 1} \de_j \right)}, \quad m < k.
                \end{cases}
            \end{equation}
            Finally, due to property \ref{pr: 3 one-sided}, there exists a sufficiently large $M > 0$ such that 
            \begin{equation*}
                \left| \sum_{j = k}^{m - 1} \de_j \right| \le \left( \frac 1 2 - \frac \e 2 \right) (m - k), \quad m - k \ge M, 
            \end{equation*}
            which means that
            \begin{equation*}
                |T_{mk}| \lesssim e^{- \frac{\e}{4 \a} |m - k|}, \quad m, k \ge 0.
            \end{equation*}
            It follows that the operator $T$ is continuous from $\ell^p(\N_0)$ onto itself, which is what we needed to complete the proof.

    \subsection{Necessity}
        Suppose that $\L$ is complete interpolating. We prove properties \ref{pr: 1 one-sided}, \ref{pr: 2 one-sided}, \ref{pr: 3 one-sided}.

        \subsubsection{Property \ref{pr: 1 one-sided}}
            Suppose that $\L$ is not $d_{\log_+}$-separated and let us arrive at a contradiction. First, we show that $\L \cap r\D$ is Euclidean separated. If not, there is a sequence of distinct point pairs $\l_{n_j}, \l_{m_j} \in \L \cap r\D$ such that $
            |\l_{n_j} - \l_{m_j}| \to 0$. Consider $f_j$ to be the unique solution to the interpolation problem $f(\l) = \de_{\l, \l_{n_j}}$, so that $\|f_j\|_{\F^p_{\a+}} \lesssim 1$.
            Evaluating $f_j$ at the points $\l_{n_j}, \l_{m_j}$ and taking $j \to \infty$ we arrive at a contradiction to the fact that $|f'| \lesssim \|f\|_{\F^p_{\a+}}$ inside $r \D$, which follows from the mean value property. A similar argument shows that $\L \setminus r \D$ is $d_{\log}$-separated. We consider a sequence of distinct point pairs $\l_{n_j}, \l_{m_j} \in \L \setminus r\D$ such that $d_{\log}(\l_{n_j}, \l_{m_j}) \to 0$. Consider $f_j$ to be the solution to the interpolation problem $f(\l) = e^{\f(\l)} |\l|^{-2/p} \de_{\l, \l_{n_j}}$, so that $\|f_j\|_{\F^p_{\a+}} \lesssim 1$. Evaluating $f_j$ at the points $\l_{n_j}, \l_{m_j}$ and taking $j \to \infty$ we arrive at a contradiction to Lemma \ref{lem: more accurate estimates}.
            
            Hence, we have property \ref{pr: 1 one-sided} and so we can enumerate $\L = ( \l_k )_{k \ge 0}$ so that $|\l_k| \le |\l_{k + 1}|$ and write 
            \begin{equation*}
                \l_k = e^{\frac{k + 2/p + \de_k}{2\a}} e^{i \theta_k}, \quad \de_k, \theta_k \in \R.    
            \end{equation*}
            Finally, property \ref{pr: 1 one-sided} allows us to choose phases for $\G$ in such a way that $d_{\log}(\l_k, \G) \asymp 1$. 

        \subsubsection{Property \ref{pr: 2 one-sided}}
            For property \ref{pr: 2 one-sided} we need to show that $(\de_k)_{k \ge 0} \in \ell^{\infty}(\N_0)$. Since $\L$ is complete interpolating that implies that for any $k \ge 0$ there exists $g_k \in \F^p_{\a+}$ that uniquely interpolates $g_k(\l_j) = \de_{k,j}, \ j \ge 0$, moreover $\|g_k\|^p_{\F^p_{\a+}} \asymp (1 + |\l_k|)^2 e^{-p \f(\l_k)}$. It follows from Lemma \ref{lem: evaluation estimate} that $g_k$ is of order $0$, therefore, by the Hadamard factorization theorem, we have
            \begin{equation*}
                g_k(z) = \prod_{j \ge 0, j \ne k} \left( 1 - \frac{z}{\l_j} \right).
            \end{equation*}
            In other words,
            \begin{equation} \label{eq: biorthogonal element one-sided}
                g_k(z) = \frac{G_{\L}(z)}{G'_{\L}(\l_k)(z - \l_k)}.        
            \end{equation}
            Now, for contradiction, suppose that $\de_k$ is not bounded. Then there exists a subsequence $k_j$ such that $|\de_{k_j}| \to \infty$. If we let $\g_{m_j} \in \G$ be the $d_{\log}$-closest point to $\l_{k_j}$, then we get $|m_j - k_j| \to \infty$. Our next step is to study the behavior of $|g_{k_j}(\g_{m_j})|$. It follows from Lemma \ref{lem: product estimates coarse} and property \ref{pr: 1 one-sided} that
            \begin{align*}
                |G_{\L}(\g_{m_j})| & \asymp \prod_{l = 0}^{k_j - 1} \frac{|\g_{m_j}|}{|\l_l|}, \\
                |G'_{\L}(\l_{k_j})| & \asymp \frac{1}{|\l_{k_j}|} \prod_{l = 0}^{k_j - 1} \frac{|\l_{k_j}|}{|\l_l|}.
            \end{align*}
            From this it follows that
            \begin{equation*}
                |g_{k_j}(\g_{m_j})| \asymp \left| \frac{\g_{m_j}}{\l_{k_j}} \right|^{k_j} \asymp e^{\frac{1}{4 \a} 2k_j (m_j - k_j - \de_{k_j})}.
            \end{equation*}
            At the same time, by Lemma \ref{lem: evaluation estimate}, we have
            \begin{equation*}
                |g_{k_j}(\g_{m_j})| \lesssim \frac{e^{\f(\g_{m_j})}}{|\g_{m_j}|^{2/p}} \| g_{k_j} \|_{\F^p_{\a+}} \asymp e^{\f(\g_{m_j}) - \f(\l_{k_j})} \asymp e^{\frac{1}{4 \a}\left( (m_j + 2/p)^2 - (k_j + \de_{k_j} + 2/p)^2 \right)}.
            \end{equation*}
            Combining these two estimates, we get
            \begin{equation} \label{eq: de to infty contradiction}
                e^{\frac{1}{4 \a} (m_j - k_j - \de_{k_j})(k_j - m_j - \de_{k_j} - 4/p)} \lesssim 1.
            \end{equation}
            By choosing a subsequence of $\de_{k_j}$, we can assume that $\de_{k_j} \to \infty$ or $\de_{k_j} \to -\infty$. In the case $\de_{k_j} \to \infty$ we get $k_j - m_j - \de_{k_j} - 4/p \to -\infty$. So, it is enough to decrease all $m_j$ by one sufficiently large integer to get $m_j - k_j - \de_{k_j} \le -1$ to arrive at a contradiction to \eqref{eq: de to infty contradiction}. Similarly, in the case $\de_{k_j} \to -\infty$, we get $k_j - m_j - \de_{k_j} - 4/p \to \infty$. So, increasing all $m_j$ by one sufficiently large integer to get $m_j - k_j - \de_{k_j} \ge 1$ gives us a contradiction to \eqref{eq: de to infty contradiction}. We conclude that $\de_k$ has to be bounded, which is exactly property \ref{pr: 2 one-sided}.

        \subsubsection{Property \ref{pr: 3 one-sided}}
            Finally, let us prove property \ref{pr: 3 one-sided}. Suppose that it does not hold, i.e., for every $N > 0$ 
            \begin{equation*}
                \sup_{n \ge 0} \left| \frac{1}{N} \sum_{k = n}^{n + N - 1} \de_k \right| = \frac 1 2 + \e_N,     
            \end{equation*}
            with $\e_N \ge 0$. Consider again the operator $T$ as in \eqref{eq: T definition}.
            Since $\L$ is complete interpolating, $T$ must be continuous. Moreover, because we established property \ref{pr: 2 one-sided}, the estimate \eqref{eq: Tmk estimate} holds true, i.e.,
            \begin{equation*}
                |T_{mk}| \asymp 
                \begin{cases}
                    1, & \quad m = k, \\
                    e^{\frac{1}{4 \a} \left( -(m - k) - 2 \sum_{j = k}^{m - 1} \de_j \right)} ,& \quad m > k, \\
                    e^{\frac{1}{4 \a} \left( -(k - m) + 2 \sum_{j = m}^{k - 1} \de_j \right)}, &  \quad m < k.
                \end{cases}
            \end{equation*}
            First, we show that $N \e_N$ must be bounded. Suppose that it is not; this means that there exists a sequence $N_j$ such that $N_j \e_{N_j} \to \infty$. By the definition of $\e_N$ we can find $n_j$ such that
            \begin{equation*}
                \left| \frac{1}{N_j} \sum_{k = n_j}^{n_j + N_j - 1} \de_k \right| \ge \frac 1 2 + \e_{N_j} - \frac{1}{N_j}.        
            \end{equation*}
            Considering $m_j = n_j$ and $k_j = n_j + N_j$, we get
            \begin{equation*}
                \max(|T_{m_jk_j}|, |T_{k_jm_j}|) \gtrsim e^{\frac{1}{4 \a} \left( - N_j + 2 |\sum_{r = n_j}^{n_j + N_j - 1} \de_r| \right)} \ge e^{\frac{N_j \e_{N_j} - 1}{2 \a}} \to \infty. 
            \end{equation*}
            This means that in the matrix $(T_{mk})_{m, k \ge 0}$ there is a sequence of elements that grows unboundedly, in contradiction to the fact that $T$ must be a continuous operator on $\ell^p(\N_0)$. We conclude that $N \e_N$ has to be bounded.
            It remains to see that we cannot have $\e_N \ge 0$ for all $N$. Consider a sequence $N_j \to \infty$, we can find $n_j$ such that
            \begin{equation*}
                \left| \frac{1}{N_j} \sum_{k = n_j}^{n_j + N_j - 1} \de_k \right| \ge \frac 1 2 + \e_{N_j} - \frac{1}{N_j}.  
            \end{equation*}
            If we have
            \begin{equation*}
                \frac{1}{N_j} \sum_{k = n_j}^{n_j + N_j - 1} \de_k \ge \frac 1 2 + \e_{N_j} - \frac{1}{N_j},
            \end{equation*}
            then for $0 < K < N_j$ we get
            \begin{equation*}
                \sum_{r = n_j + K}^{n_j + N_j - 1} \de_r \ge \sum_{r = n_j}^{n_j + N_j - 1} \de_r - \left| \sum_{r = n_j}^{n_j + K - 1} \de_r \right| \ge \frac{N_j - K}{2} + N_j \e_{N_j} - K \e_K - 1.
            \end{equation*}
            Thus,
            \begin{equation*}
                |T_{mk}| \gtrsim e^{\frac{1}{2 \a} \left( N_j \e_{N_j} - K \e_{K} - 1 \right)} \gtrsim 1,
            \end{equation*}
            for $k = n_j + N_j$ and $m = n_j + K$ with $0 < K < N_j$, i.e., we get a part of the column of $(T_{mk})_{m, k \ge 0}$ with elements bounded away from $0$. Similarly, if 
            \begin{equation*}
                \frac{1}{N_j} \sum_{k = n_j}^{n_j + N_j - 1} \de_k \le -\frac 1 2 - \e_{N_j} + \frac{1}{N_j},
            \end{equation*}
            we get 
            \begin{equation*}
                |T_{mk}| \gtrsim e^{\frac{1}{2 \a} \left( N_j \e_{N_j} - K \e_{K} - 1 \right)} \gtrsim 1,
            \end{equation*}
            for $m = n_j + N_j$ and $k = n_j + K$ with $0 < K < N_j$, i.e., we get a part of the row of $(T_{mk})_{m, k \ge 0}$ with elements bounded away from $0$. In conclusion, we get increasingly long parts of the rows and columns of $(T_{mk})_{m, k \ge 0}$ with coefficients uniformly bounded away from $0$, which contradicts the continuity of $T$ on $\ell^p(\N_0)$. This completes the proof of Theorem \ref{thrm: CIS characterisation for one-sided}. \hfill \qedsymbol

\section{Proof of Theorem \ref{thrm: CIS characterisation for two-sided}} \label{sect: Theorem two-sided}
    % Similar to Theorem 1, highlight differences, skip similar calculations 
    The proof of this theorem follows the same path as the proof of Theorem \ref{thrm: CIS characterisation for one-sided}. We go over the steps of the proof, highlight the differences, and dive into the details only if the difference is major. Recall that in the two-sided context $\f(z)$ stands for $\a \log^2|z|$.
    \subsection{Sufficiency}
        \subsubsection{Completeness}
            We prove by contradiction that any $\L$ satisfying properties \ref{pr: 1 two-sided}-\ref{pr: 3 two-sided} is a uniqueness set for $\F^p_{\a}$. 
            Suppose that there is a non-zero function $f \in \F^p_{\a}$ such that $f|_{\L} = 0$. As before, we have $f(z) = h(z)G_{\L}(z)$ except that now with some $h \in \Hol(\C_0)$ and the canonical product
            \begin{equation} \label{eq: canonical product two-sided}
                G_{\L}(z) = \prod_{m \ge 1} \left( 1 - \frac{\l_{-m}}{z} \right) \prod_{k \ge 0} \left( 1 - \frac{z}{\l_k} \right).
            \end{equation}
            Lemma \ref{lem: product estimate fine} with property \ref{pr: 3 two-sided} gives us 
            \begin{equation*}
                |G_{\L}(z)| \gtrsim \frac{e^{\f(z)} \dist(z, \L)}{|z|^{1 + 2/p - \e}},
            \end{equation*}
            and, estimating $f$ with Lemma \ref{lem: evaluation estimate}, we get
            \begin{equation*}
                |h(z)| \lesssim \frac{|z|^{1 - \e}}{\dist(z, \L)}.
            \end{equation*}
            Since $h$ is not entire, this time we will have to consider two collections of contours going to $0$ and $\infty$ respectively. We consider contours $C_n, C_n'$, $n \ge 0$ around $0$ such that $C_n \cap e^n \D = \emptyset$ and $C_n' \subset e^{-n} \D, \ e^{- n - 1} \D \cap C'_n = \emptyset$, on which an estimate $\dist(z, \L) \gtrsim |z|$ holds. First, let us show that $h$ does not have a singularity at $0$. Consider $z: e^{-n-1} \le |z| < e^{-n}$, by the maximum principle we get
        \begin{equation*}
            |h(z)| \le \max_{w \in C'_{n - 1} \cup C'_{n + 1}} |h(w)| \lesssim \max_{w \in C'_{n - 1} \cup C'_{n + 1}} |w|^{-\e} \le e^{\e (n + 1)} \asymp |z|^{-\e}.
        \end{equation*}
        This implies that $zh(z)$ is analytic at $0$ and vanishes there, which in turn means that $h(z)$ is analytic at $0$.        
        
        Now that we know that $h$ is entire, the end of the proof is exactly the same. For $z \in \C$, by the maximum principle, for large enough $n$ we get
        \begin{equation*}
            |h(z)| \lesssim \max_{w \in C_n} \frac{|w|^{\e}}{\dist(w, \L)} \lesssim e^{-\e n}.
        \end{equation*}
        Letting $n$ tend to $\infty$, we conclude that $h(z) = 0$, which implies $f = 0$, a contradiction.
            
        \subsubsection{Base sequence}
            In this case, the base unperturbed sequence and its generating functions are
            \begin{equation*}
                \G = ( \g_k )_{k \in \Z} = \left( e^{\frac{k + 2/p}{2\a}} \right)_{k \in \Z},
            \end{equation*}
            \begin{equation*}
                G_{\G}(z) = \prod_{m \ge 1} \left( 1 - \frac{\g_{-m}}{z} \right) \prod_{k \ge 0} \left( 1 - \frac{z}{\g_k} \right).
            \end{equation*}
            \begin{prop} \label{prop: Gamma CIS two-sided}
                The sequence $\G$ is complete interpolating for $\F^p_{\a}$. Moreover, it stays complete interpolating even if we change the phases of $( \g_k )_{k \in \Z}$ in any way.
            \end{prop}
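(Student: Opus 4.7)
The plan is to mirror the proof of Proposition \ref{prop: Gamma CIS one-sided}, adapting each step to the two-sided setting. Uniqueness comes for free: since $\G$ satisfies properties \ref{pr: 1 two-sided}--\ref{pr: 3 two-sided} with all $\de_k = 0$, the completeness argument just given shows that $\G$ is a uniqueness set for $\F^p_{\a}$. For the interpolating property, given a finitely supported sequence $(c_k)_{k \in \Z}$, I would set
\begin{equation*}
    g_k(z) = \frac{G_{\G}(z)}{G'_{\G}(\g_k)(z - \g_k)}, \qquad f(z) = \sum_{k \in \Z} c_k |\g_k|^{-2/p} e^{\f(\g_k)} g_k(z),
\end{equation*}
so that $f$ is holomorphic on $\C_0$ with $f(\g_k) = c_k$, and reduce the claim to the uniform bound $\|f\|_{\F^p_{\a}} \lesssim \|c\|_{\ell^p(\Z)}$ on finitely supported sequences; the general case then follows by passing to the limit as in the one-sided proof.

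The key technical input is the two-sided analogue of Lemma \ref{lem: product estimate fine}, namely
\begin{equation*}
    |G_{\G}(z)| \asymp \frac{e^{\f(z)} \dist(z, \G)}{|z|^{1/2 + 2/p}}, \qquad |G'_{\G}(\g_k)| \asymp \frac{e^{\f(\g_k)}}{|\g_k|^{1/2 + 2/p}}.
\end{equation*}
The large-$|z|$ regime follows from Lemma \ref{lem: product estimate fine} applied to the factor $\prod_{k \ge 0}(1 - z/\g_k)$, together with the observation that the factor $\prod_{m \ge 1}(1 - \g_{-m}/z)$ is uniformly bounded above and below on $\{|z| \ge 1\}$ by Lemma \ref{lem: product estimates coarse}. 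The small-$|z|$ regime is handled symmetrically via the inversion $z \mapsto c/z$ with $c = e^{2/(p\a)}$, under which $\G$ is mapped to itself with positive and negative indices swapped, so the small-$|z|$ estimate reduces to the large-$|z|$ one after relabelling. Combining yields an analogue of \eqref{eq: estimate on g_k} with $(1 + |z|)^{1/2 + 2/p}$ and $(1 + |\g_k|)^{1/2 + 2/p}$ replaced by $|z|^{1/2 + 2/p}$ and $|\g_k|^{1/2 + 2/p}$, respectively.

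To bound $\|f\|_{\F^p_{\a}}$, I would follow the one-sided argument but split $\C_0$ into the three regions $\{|z| < |\g_k|/2\}$, $\{|\g_k|/2 \le |z| \le 2|\g_k|\}$, and $\{|z| > 2|\g_k|\}$ to accommodate the extra singularity at the origin. For $0 < p < 1$, sub-additivity reduces the task to showing that
\begin{equation*}
    I_k := \int_{\C_0} \frac{|\g_k|^{p/2} \dist(z, \G)^p}{|z|^{p/2 + 2} |z - \g_k|^p} dA(z) \lesssim 1
\end{equation*}
uniformly in $k$; the outer and inner regions give convergent polar-coordinate integrals using $\dist(z,\G) \lesssim |z|$ and $|z - \g_k| \asymp \max(|z|, |\g_k|)$, while the middle region is controlled by $\dist(z,\G) \le |z - \g_k|$ together with the area bound $\asymp |\g_k|^2$. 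For $p \ge 1$ the Jensen-plus-weight argument of the one-sided case extends once the identity $\sum_k \dist(z, \G)|\g_k|^{1/2}/(|z|^{1/2}|z - \g_k|) \asymp 1$ is verified via the same partition. Phase invariance is automatic, since none of the estimates above depend on $\arg \g_k$. The main obstacle will be implementing the inversion trick cleanly enough to recover the exponent $1/2 + 2/p$ at $0$ consistently with the one at $\infty$; once this symmetric product estimate is in hand, the remainder of the proof closely parallels the one-sided case.
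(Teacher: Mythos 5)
Your proof mirrors the paper's in its overall structure: uniqueness is inherited from the completeness argument, the interpolant $f = \sum_k c_k |\g_k|^{-2/p} e^{\f(\g_k)} g_k$ is the same, and the norm bound proceeds by sub-additivity for $0 < p < 1$ and Jensen's inequality with the weight bound $\sum_k \dist(z,\G)|\g_k|^{1/2}/(|z|^{1/2}|z-\g_k|) \asymp 1$ for $1 \le p \le \infty$. The one genuine divergence is how you establish the two-sided product estimate $|G_\G(z)| \asymp e^{\f(z)}\dist(z,\G)/|z|^{1/2+2/p}$ on all of $\C_0$. The paper simply cites Lemma~\ref{lem: product estimate fine}, which as stated treats only one-sided sequences indexed by $\N_0$; the two-sided extension is left implicit. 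Your splitting-plus-inversion argument makes this explicit: you factor $G_\G$ into its positive- and negative-index halves near $\infty$, and reduce the behaviour near $0$ to the behaviour near $\infty$ via $z \mapsto c/z$ with $c = e^{2/(p\a)}$, which permutes $\G$ by $\g_k \mapsto \g_{-k}$. A short computation confirms that the exponent $1/2 + 2/p$ is indeed reproduced under this inversion, so the trick is sound and arguably cleaner than silently extending the lemma.

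Two small caveats. First, your split at $|z| \ge 1$ is off when $p < 2$: then $|\g_{-m}| = e^{(-m+2/p)/(2\a)} > 1$ for finitely many $m$, so $\prod_{m\ge1}(1 - \g_{-m}/z)$ is neither zero-free nor bounded below on $\{|z| \ge 1\}$. You should cut at $|z| \ge R$ for some $R$ exceeding $|\g_{-1}|$ (and handle the remaining compact annulus $c/R < |z| < R$ by continuity). Second, your three-region partition $\{|z|<|\g_k|/2\}$, $\{|\g_k|/2 \le |z| \le 2|\g_k|\}$, $\{|z|>2|\g_k|\}$ works, but the paper's two-region split $D_1 = \{0<|z|<|\g_k|/2\}$, $D_2 = \{|\g_k|/2 \le |z|\}$ already suffices: on $D_2$ one can simply use $\dist(z,\G) \le |z-\g_k|$ and the resulting polar integral $\int_{|\g_k|/2}^\infty r^{-p/2-1}\,dr$ converges. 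Neither point affects the correctness of your plan, only its economy.
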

            \begin{proof}
                We again only need to prove that $\G$ is interpolating for $\F^p_{\a}$.
                For $k \in \Z$ set
                \begin{equation*}
                    g_k(z) = \frac{G_{\G}(z)}{G'_{\G}(\g_k)(z - \g_k)}.
                \end{equation*}
                We prove that 
                \begin{equation} \label{eq: interpolation formula for gamma two-sided}
                    f(z) = \sum_{k \in \Z} c_k |\g_k|^{-2/p}e^{\f(\g_k)} g_k(z)
                \end{equation}
                belongs to $\F^p_{\a}$ and interpolates an arbitrary sequence $(c_k)_{k \in \Z} \in \ell^p(\Z)$, i.e., $R_{\G} f = c$. We estimate $|G_{\G}|$ using Lemma \ref{lem: product estimate fine}:
                \begin{equation} \label{eq: Generating for Gamma estimate two-sided}
                    |G_{\G}(z)| \asymp \frac{e^{\f(z)} \dist(z, \G)}{|z|^{1/2 + 2/p}}, \quad z \in \C_0,
                \end{equation}
                \begin{equation} \label{eq: Generating for Gamma derivates two-sided}
                    |G'_{\G}(\g)| \asymp \frac{e^{\f(\g)}}{|\g|^{1/2 + 2/p}}, \quad \g \in \G.
                \end{equation}
                As before $f$ is a well-defined entire function by \eqref{eq: Generating for Gamma derivates two-sided}, $f(\g_k) = c_k$. Thus, it remains to show that $f \in \F^p_{\a}$. Similarly to the one-sided case, we separate two cases: $0 < p < 1$ and $1 \le p \le \infty$. For $0 < p < 1$
                \begin{equation*}
                    \|f\|^p_{\F^p_{\a}} = \int_{\C_0} |f(z)|^p e^{-p\f(z)} dA(z) \le \sum_{k \in \Z} |c_k|^p \int_{\C_0} |\g_k|^{-2}e^{p\f(\g_k)} |g_k(z)|^p e^{-p\f(z)} dA(z).
                \end{equation*}
                Thus, to show $f \in \F^p_{\a}$ it is sufficient to prove
                \begin{equation*}
                    I_k :=  \int_{\C_0} |\g_k|^{-2}e^{p\f(\g_k)} |g_k(z)|^p e^{-p\f(z)} dA(z) \lesssim 1.    
                \end{equation*}
                Using \eqref{eq: Generating for Gamma estimate two-sided} and \eqref{eq: Generating for Gamma derivates two-sided} to estimate $g_k(z)$ we get
                \begin{equation*}
                    I_k \asymp \int_{\C_0} \frac{\dist(z, \G)^p |\g_k|^{p/2}}{|z|^{p/2 + 2} |z - \g_k|^p} dA(z).
                \end{equation*}
                We partition $\C_0$ into two parts $D_1 = \{z: 0 < |z| < |\g_k|/2 \}$, $D_2 = \{z: |\g_k|/2 \le |z| \}$, which decomposes $I_k$ into the sum of respective integrals $I_k = I_k^{(1)} + I_k^{(2)}$. We then estimate each of them separately:
                \begin{align*}
                    I_k^{(1)} &\asymp \int_{0 < |z| < |\g_k|/2} \frac{|z|^p |\g_k|^{p/2}}{|z|^{p/2 + 2} |\g_k|^p} dA(z) \asymp |\g_k|^{-p/2} \int_0^{|\g_k|/2} r^{p/2 - 2} r dr \lesssim 1, \\
                    I_k^{(2)} &\lesssim \int_{|\g_k|/2 \le |z|} \frac{|\g_k|^{p/2}}{|z|^{p/2 + 2}} dA(z) \asymp |\g_k|^{p/2} \int_{|\g_k|/2}^{\infty} r^{-p/2 - 2} r dr \lesssim 1. 
                \end{align*} 
                We conclude that $I_k \lesssim 1$, and so $f \in \F^p_{\a}$, which completes the $0 < p < 1$ case.

                For $p \ge 1$, we bound $|f(z)|$ as
                \begin{equation*}
                    |f(z)|e^{-\f(z)} \lesssim  \sum_{k \in \Z} |c_k|  \frac{\dist(z, \G)|\g_k|^{1/2}}{|z|^{1/2 + 2/p} |z - \g_k|}.
                \end{equation*}
                The fact that
                \begin{equation*}
                    \sum_{k \in \Z} \frac{\dist(z, \G)|\g_k|^{1/2}}{|z|^{1/2} |z - \g_k|} \asymp 1
                \end{equation*}
                similarly follows by considering the $d_{\log}$-closest to $z$ point $\g_n$ and separating the sum into two parts $k \le n$ and $k > n$. For $p = \infty$, we immediately get $f \in \F^p_{\a}$. For $1 \le p < \infty$, we use the Jensen's inequality to estimate $\|f\|^p_{\F^p_{\a}}$:
                \begin{equation*}
                    \|f\|^p_{\F^p_{\a}} = \int_{\C_0} |f(z)|^pe^{-p\f(z)} dA(z) \lesssim \sum_{k \in \Z} |c_k|^p \int_{\C_0 } \frac{\dist(z, \G)|\g_k|^{1/2}}{|z|^{5/2}|z - \g_k|} dA(z).
                \end{equation*}
                We define
                \begin{equation*}
                    I_k = \int_{\C_0 } \frac{\dist(z, \G)|\g_k|^{1/2}}{|z|^{5/2} |z - \g_k|} dA(z),
                \end{equation*}
                and prove $I_k \lesssim 1$. In a similar manner, we partition $\C_0$ into two parts $D_1 = \{z: 0 < |z| < |\g_k|/2 \}$, $D_2 = \{z: |\g_k|/2 \le |z| \}$, and estimate $\frac{\dist(z, \G)}{|z - \g_k|} \lesssim \frac{|z|}{|\g_k|}$ on $D_1$ and $\frac{\dist(z, \G)}{|z - \g_k|} \lesssim 1$ on $D_2$. Integrating separately over $D_1$ and $D_2$ with these estimates we get $I_k \lesssim 1$. This completes the proof in the case $1 \le p \le \infty$.
                
                We have proven that $\G$ is complete interpolating for $\F^p_{\a}$. Note that adding phases to $\G$ does not affect the proof.         
            \end{proof} 

        \subsubsection{Interpolation} 
            We prove that $\L$ satisfying properties \ref{pr: 1 two-sided}-\ref{pr: 3 two-sided} is also interpolating for $\F^p_{\a}$. Since $\L$ is $d_{\log}$-separated, by Remark \ref{rmk: evaluation estimate} the restriction operator $R_{\L}$ is bounded. Consider 
            \begin{equation*}
                g_k(z) = \frac{G_{\L}(z)}{G'_{\L}(\l_k)(z - \l_k)}.
            \end{equation*}
            By Lemma \ref{lem: product estimate fine} and property \ref{pr: 3 two-sided} we have
            \begin{align*}  
                \frac{e^{\f(z)} \dist(z, \L)}{|z|^{1 + 2/p - \e}} \lesssim |G_{\L}(z)| \lesssim \frac{e^{\f(z)} \dist(z, \L)}{|z|^{2/p + \e}}, \quad z \in \C_0, 
                % \label{eq: estimate on G Lambda two-sided} 
                \\
                e^{\f(\l)} |\l|^{-1 - 2/p + \e} \lesssim |G'_{\L}(\l)| \lesssim e^{\f(\l)} |\l|^{- 2/p - \e}, \quad \l \in \L. 
                % \label{eq: estimate on G' Lambda two-sided} 
            \end{align*}
            This implies $g_k \in \F^p_{\a}$, so we can interpolate finite sequences $c \in \ell^p(\Z)$ by finite linear combinations as follows:
            \begin{equation*}
                f(z) = \sum_{k \in \Z} c_k |\l_k|^{-2/p}e^{\f(\l_k)} g_k(z).
            \end{equation*}
            It is sufficient to prove stability on finite sequences. This is equivalent to proving the continuity of the Hilbert transform-type operator $T: \ell^p(\Z) \to \ell^p(\Z)$
            \begin{equation} \label{eq: T two-sided}
                (Tc)_m = \sum_{k \in \Z} \left| \frac{\g_m}{\l_k} \right|^{2/p} e^{\f(\l_k) - \f(\g_m)} \frac{G_{\L}(\g_m)}{G'_{\L}(\l_k)(\g_m - \l_k)} c_k = \sum_{k \in \Z} T_{mk} c_k, 
            \end{equation}
            \begin{equation*}
                |T_{mk}| = \left| \frac{\g_m}{\l_k} \right|^{2/p} e^{\f(\l_k) - \f(\g_m)} \left| \frac{G_{\L}(\g_m)}{G'_{\L}(\l_k)(\g_m - \l_k)} \right|.
            \end{equation*}
            Lemma \ref{lem: product estimates coarse} and property \ref{pr: 2 two-sided} give us
            \begin{align*}
                |G_{\L}(\g_m)| & \asymp \frac{\dist(\g_m, \L)}{|\g_m|} \prod_{j = 0}^{m - 1} \frac{|\g_m|}{|\l_j|}, \quad m \ge 0, \\
                |G_{\L}(\g_{-m})| & \asymp \frac{\dist(\g_{-m}, \L)}{|\g_{-m}|} \prod_{j = 1}^{m} \frac{|\l_{-j}|}{|\g_{-m}|}, \quad m \ge 1, \\
                |G'_{\L}(\l_k)| & \asymp \frac{1}{|\l_k|} \prod_{j = 0}^{k - 1} \frac{|\l_k|}{|\l_j|}, \quad k \ge 0, \\
                |G'_{\L}(\l_{-k})| & \asymp \frac{1}{|\l_{-k}|} \prod_{j = 1}^{k} \frac{|\l_{-j}|}{|\l_{-k}|}, \quad k \ge 1.
            \end{align*}
            Thus, we estimate
            \begin{equation*}
                |T_{mk}| \asymp e^{\f(\l_k) - \f(\g_m)} \left| \frac{\g_m}{\l_k} \right|^{2/p - 1} \frac{\dist(\g_m, \L)}{|\g_m - \l_k|} |\g_m|^m |\l_k|^{-k} P_{mk} ,
            \end{equation*}
            where 
            \begin{equation*}
                P_{mk} =
                \begin{cases}
                    \prod_{j = m}^{k - 1} |\l_j|, & \quad m < k, \\
                    1, & \quad m = k, \\
                    \prod_{j = k}^{m - 1} |\l_j|^{-1}, & \quad m > k.
                \end{cases}
            \end{equation*}
            We choose phases for $\G$ so that $\dist(\g_m, \L) \asymp |\g_m|$ and get
            \begin{equation} \label{eq: Tmk estimate two-sided}
                |T_{mk}| \asymp 
                \begin{cases}
                    e^{\frac{1}{4 \a} \left( -(k - m) + 2 \sum_{j = m}^{k - 1} \de_j \right)}, & \quad m < k, \\
                    1, & \quad m = k, \\
                    e^{\frac{1}{4 \a} \left( -(m - k) - 2 \sum_{j = k}^{m - 1} \de_j \right)}, & \quad m > k.
                \end{cases}
            \end{equation}
            Due to property \ref{pr: 3 two-sided} we obtain
            \begin{equation*}
                |T_{mk}| \lesssim e^{- \frac{\e}{4 \a} |m - k|}, \quad m, k \in \Z.
            \end{equation*}
            Thus, the operator $T$ is continuous from $\ell^p(\Z)$ onto itself, which completes the proof.

    \subsection{Necessity}
        Suppose that $\L$ is complete interpolating. 

        \subsubsection{Property \ref{pr: 1 two-sided}}
            We only need the second part of the argument for the two-sided case. Suppose that $\L$ is not $d_{\log}$-separated and let us get a contradiction. 
            We consider a sequence of distinct point pairs $\l_{n_j}, \l_{m_j} \in \L$ such that $d_{\log}(\l_{n_j}, \l_{m_j}) \to 0$. Consider $f_j$ to be the solution to the interpolation problem $f(\l) = e^{\f(\l)} |\l|^{-2/p} \de_{\l, \l_{n_j}}$, so that $\|f_j\|_{\F^p_{\a}} \lesssim 1$. Evaluating $f_j$ at the points $\l_{n_j}, \l_{m_j}$ and taking $j \to \infty$, we arrive at a contradiction to Lemma \ref{lem: more accurate estimates}.
            
            Hence, we have property \ref{pr: 1 two-sided} and so we can enumerate $\L = ( \l_k )_{k \in \Z}$ so that $|\l_k| \le |\l_{k + 1}|$ and write
            \begin{equation*}
                \l_k = e^{\frac{k + 2/p + \de_k}{2\a}} e^{i \theta_k}, \quad \de_k, \theta_k \in \R.    
            \end{equation*}
            Finally, property \ref{pr: 1 two-sided} allows us to choose phases for $\G$ in such a way that $d_{\log}(\l_k, \G) \asymp 1$.

        \subsubsection{Property \ref{pr: 2 two-sided}}
            For property \ref{pr: 2 two-sided}, we need to show that $(\de_k)_{k \in \Z} \in \ell^{\infty}(\Z)$. Since $\L$ is complete interpolating, that implies that for any $k \in \Z$ there exists $g_k \in \F^p_{\a}$ that uniquely interpolates $g_k(\l_j) = \de_{k,j}, \ j \in \Z$, moreover $\|g_k\|^p_{\F^p_{\a}} \asymp |\l_k|^2 e^{-p \f(\l_k)}$. The major difference from the one-sided case is that we can make an arbitrary integer shift in the enumeration of $\l_n, \ n \in \Z$ and still get a sequence growing in moduli. In particular, we get infinitely many different canonical products \eqref{eq: canonical product two-sided} for each such shift. In contrast to \eqref{eq: biorthogonal element one-sided}, it is not clear which canonical product gives us each $g_k, \ k \in \Z$. 
            We claim that the right choice of canonical product exists and it is the same for all $k \in \Z$.
            \begin{prop}
                There exists $M \in \Z$ such that
                \begin{equation} \label{eq: generating function enumeration shift}
                    g_k(z) = \left( \frac{z}{\l_k} \right)^M \frac{G_{\L}(z)}{G'_{\L}(\l_k)(z - \l_k)}, \quad k \in \Z.         
                \end{equation}    
            \end{prop}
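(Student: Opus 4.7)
The plan is to show, for each $k \in \Z$, that the ratio
$$h_k(z) := \frac{g_k(z) G'_{\L}(\l_k)(z - \l_k)}{G_{\L}(z)}$$
is a Laurent monomial $(z/\l_k)^{M_k}$, and then to verify that the exponent $M_k$ does not depend on $k$. To begin, I would argue that $g_k$ has only simple zeros, located exactly at $\L \setminus \{\l_k\}$. If $g_k$ had an extra zero at some $z_0 \notin \L$, or a zero of order at least two at some $\l_j$ with $j \ne k$, then $f(z) := g_k(z)/(z - z_0)$ (resp.\ $g_k(z)/(z - \l_j)$) would still belong to $\F^p_{\a}$, because the vanishing of $g_k$ absorbs the local singularity of $1/(z - z_0)$ and globally this factor is bounded away from $z_0$. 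But $f$ interpolates precisely the data of $g_k/(\l_k - z_0)$, so uniqueness of the biorthogonal system yields $g_k(z)/(z - z_0) = g_k(z)/(\l_k - z_0)$, which is absurd. Consequently $h_k$ is holomorphic and nowhere-vanishing on $\C_0$ with $h_k(\l_k) = 1$, and one may factor $h_k(z) = z^{M_k} e^{q_k(z)}$ with $q_k \in \Hol(\C_0)$ and $M_k \in \Z$ the winding number of $h_k$ about the origin.

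Next, I would force $q_k$ to be constant by a two-sided Liouville-type argument. Combining Lemma \ref{lem: evaluation estimate} with the biorthogonal norm estimate $\|g_k\|_{\F^p_{\a}}^p \asymp |\l_k|^2 e^{-p\f(\l_k)}$ gives $|g_k(z)| e^{-\f(z)} \lesssim (|\l_k|/|z|)^{2/p} e^{-\f(\l_k)}$, while the two-sided counterpart of Lemma \ref{lem: product estimate fine} (already used in the sufficiency argument of this section under property \ref{pr: 3 two-sided}) yields the lower bound $|G_{\L}(z)| \gtrsim e^{\f(z)} \dist(z, \L)\, |z|^{-(1 + 2/p - \e)}$. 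Dividing, one obtains polynomial upper bounds on $|h_k(z)|$ both as $|z| \to \infty$ and as $|z| \to 0$. Writing the Laurent expansion $q_k(z) = \sum_n a_n z^n$, any nonzero coefficient with $n > 0$ (resp.\ $n < 0$) would make $|e^{q_k(z)}| = e^{\Re q_k(z)}$ grow exponentially along some ray as $|z| \to \infty$ (resp.\ $|z| \to 0$), contradicting these polynomial bounds. Hence $q_k$ is constant, and normalization at $z = \l_k$ produces $h_k(z) = (z/\l_k)^{M_k}$.

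It remains to verify that $M_k$ is the same for every $k$. Substituting the formula, $|g_k(z)|^p e^{-p\f(z)}$ behaves asymptotically as $|z|^{p(M_k - A_\infty)}$ times bounded factors at $|z| \to \infty$, and as $|z|^{p(M_k - A_0)}$ at $|z| \to 0$, where $A_\infty$ and $A_0$ are the limiting exponents of the two-sided canonical product, each consisting of $\tfrac{1}{2} + 2/p$ plus the asymptotic running average of $(\de_j)$ at the corresponding tail of $\L$. Integrability at both ends forces $M_k \in (A_0 - 2/p,\ A_\infty - 2/p)$; property \ref{pr: 3 two-sided} bounds the running averages of $(\de_j)$ by $\tfrac{1}{2} - \e$, so $A_\infty - A_0 < 1$ and the interval contains a unique integer. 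Since $A_\infty$ and $A_0$ depend only on $\L$, this common integer serves as $M$ for every $k \in \Z$. The main obstacle I anticipate is the careful two-sided asymptotic analysis that converts property \ref{pr: 3 two-sided} into the gap bound $A_\infty - A_0 < 1$ and pinpoints the unique admissible integer; once this is in place, the remaining steps are routine.
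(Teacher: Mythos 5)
Your overall plan (show the ratio $h_k = g_k \cdot G'_{\L}(\l_k)(z-\l_k)/G_{\L}$ is holomorphic, nowhere-vanishing on $\C_0$, factor $h_k(z) = z^{M_k} e^{q_k(z)}$, kill $q_k$ by growth bounds, then pin down a common $M$) is a sensible alternative route, and your first step --- showing $g_k$ has only simple zeros, exactly at $\L \setminus \{\l_k\}$, by dividing out an extra factor and invoking uniqueness of interpolation --- is correct and is in fact what the paper implicitly uses too. However, the proof is circular at the point where you invoke the key growth estimates. This proposition is being proved as a stepping stone toward property \ref{pr: 2 two-sided} ($(\de_k)\in\ell^\infty$), and property \ref{pr: 3 two-sided} is only established afterwards; at this stage the only structural information available on $\L$ is property \ref{pr: 1 two-sided} ($d_{\log}$-separation). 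Your argument twice uses the unproved properties: the lower bound $|G_{\L}(z)| \gtrsim e^{\f(z)}\dist(z,\L)\,|z|^{-(1+2/p-\e)}$ comes from Lemma \ref{lem: product estimate fine} together with property \ref{pr: 3 two-sided}, and Lemma \ref{lem: product estimate fine} itself already presupposes $(\de_k)$ bounded; and your final step (getting $A_\infty - A_0 < 1$ so that the admissible interval contains a unique integer) again cites property \ref{pr: 3 two-sided}. Without these, the exponent in any lower bound on $|G_{\L}|$ along circles away from $\L$ involves uncontrolled partial sums $\sum_{j<n}\de_j$, so no polynomial bound on $|h_k|$ is available and the Liouville step cannot close. (There is also a minor imprecision in the Liouville step: ``any nonzero coefficient with $n>0$ makes $e^{\Re q_k}$ grow exponentially along some ray'' is not literally true for an infinite Laurent series; one should run a Borel--Carath\'eodory argument on the polynomial part instead. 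That is fixable, unlike the circularity.)

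The paper sidesteps the need for any \emph{lower} bound on $|G_{\L}|$. Substituting $z=e^w$, the functions $g_0(e^w)$ and $f(e^w)=G_{\L}(e^w)/(G'_{\L}(\l_0)(e^w-\l_0))$ are entire in $w$ of order at most $2$ with identical zeros; this uses only \emph{upper} bounds, obtainable from Lemma \ref{lem: evaluation estimate} for $g_0$ and from Lemma \ref{lem: product estimates coarse} (majorizing $G_{\L}$ by a geometric-sequence product $G_{\G_\b}$) for $f$, hence only property \ref{pr: 1 two-sided}. Hadamard factorization then gives $g_0(e^w)=e^{aw^2+bw+c}f(e^w)$, and $2\pi i$-periodicity of both sides --- playing exactly the role your single-valuedness-on-$\C_0$ constraint plays --- forces $a=0$ and $b\in\Z$. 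Finally, the independence of $M$ from $k$ is obtained for free from $g_k(z)=C_k\frac{z-\l_0}{z-\l_k}g_0(z)$, so it suffices to handle $k=0$; this replaces your integrability pinch. If you wish to salvage your direct approach on $\C_0$, you would need an $\a\log^2|z|$-order upper bound on $|h_k|$ derived from property \ref{pr: 1 two-sided} alone, but that again requires a lower bound on $|G_{\L}|$, so the change of variables and the one-sided-bounds-only Hadamard strategy seem essential.
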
 
            \begin{proof}
                Note that by the complete interpolating property of $\L$ we have
                \begin{equation*}
                    g_k(z) = C_k \frac{z - \l_0}{z - \l_k} g_0(z), \quad k \in \Z,
                \end{equation*}
                for some constants $C_k \in \C_0$. This means that it is sufficient to guarantee \eqref{eq: generating function enumeration shift} for a single $k$, e.g., $k = 0$. Therefore, it suffices to prove that there exists $M \in \Z$ such that
                \begin{equation*}
                    g_0(z) = \left( \frac{z}{\l_0} \right)^M \frac{G_{\L}(z)}{G'_{\L}(\l_0)(z - \l_0)}.
                \end{equation*}
                Put
                \begin{equation*}
                    f(z) = \frac{G_{\L}(z)}{G'_{\L}(\l_0)(z - \l_0)}.
                \end{equation*}
                Consider $g_0(e^{w})$ and $f(e^w)$. These are two entire functions with the same zeros. Moreover, due to Lemma \ref{lem: evaluation estimate}, $g_0(e^w)$ is of order $2$. Similarly, because $\L$ is $d_{\log}$-separated, by Lemma \ref{lem: product estimates coarse} $|G_{\L}(z)| \lesssim |G_{\G_{\b}}(z)|$ where $\G_{\b} = (e^{\frac{k}{2 \b}})_{k \in \Z}$ for a large enough $\b > 0$. Hence, by Lemma \ref{lem: product estimate fine}, $f(e^w)$ is also of order $2$.
                Thus, by the Hadamard factorization theorem, we have
                \begin{equation*}
                    g_0(e^w) = e^{aw^2 + bw + c} f(e^w), 
                \end{equation*}
                for some $a, b, c \in \C$. Note that $g_0(e^w)$ and $f(e^w)$ are $2 \pi i$-periodic. This means that $e^{aw^2 + bw + c}$ is also $2 \pi i$-periodic, i.e., 
                \begin{equation*}
                    e^{aw^2 + bw + c} = e^{a(w + 2 \pi i)^2 + b(w + 2 \pi i) + c}, \quad w \in \C.
                \end{equation*}
                This is equivalent to 
                \begin{equation*}
                    e^{a(4 \pi i w - 4 \pi^2) + 2 \pi i b} = 1, \quad w \in \C.
                \end{equation*}
                It follows that we must have $a = 0$ and $b \in \Z$. Thus, we can take $M = b$ and $C = e^c \in \C_0$ to get
                \begin{equation*}
                    g_0(e^w) = C (e^w)^M f(e^w), \quad w \in \C.
                \end{equation*}
                Hence, we have $g_0(z) = C z^M f(z)$. It remains to note that $C = \l_0^{-M}$ since $g_0(\l_0) = f(\l_0) = 1$. This completes the proof.
            \end{proof}
            Note that \eqref{eq: generating function enumeration shift} is equivalent to
            \begin{equation*}
                g_k(z) = \frac{G_{\L}(z)}{G'_{\L}(\l_k)(z - \l_k)}, \quad k \in \Z,         
            \end{equation*}
            if we shift the enumeration of $\L$ by $M$.

            Let us return to the proof of property \ref{pr: 2 two-sided}.
            Suppose, for contradiction, that $\de_k$ is unbounded. Then there exists a subsequence $k_j$ such that $|\de_{k_j}| \to \infty$. If we let $\g_{m_j} \in \G$ be the $d_{\log}$-closest point to $\l_{k_j}$, then we get $|m_j - k_j| \to \infty$. Our next step is to study the behavior of $|g_{k_j}(\g_{m_j})|$. It follows from Lemma \ref{lem: product estimates coarse} and property \ref{pr: 1 two-sided} that
            \begin{align*}
                |G_{\L}(\g_{m_j})| & \asymp \prod_{l = 0}^{k_j - 1} \frac{|\g_{m_j}|}{|\l_l|}, \\
                |G'_{\L}(\l_{k_j})| & \asymp \frac{1}{|\l_{k_j}|} \prod_{l = 0}^{k_j - 1} \frac{|\l_{k_j}|}{|\l_l|},
            \end{align*}
            where for $k_j = 0$ we take the products to be $1$ and for $k_j < 0$ we invert the multipliers and take the product from $l = k_j$ to $l = -1$.
            From here it follows that
            \begin{equation*}
                |g_{k_j}(\g_{m_j})| \asymp \left| \frac{\g_{m_j}}{\l_{k_j}} \right|^{k_j} \asymp e^{\frac{1}{4 \a} 2k_j (m_j - k_j - \de_{k_j})}.
            \end{equation*}
            At the same time, by Lemma \ref{lem: evaluation estimate},
            \begin{equation*}
                |g_{k_j}(\g_{m_j})| \lesssim \frac{e^{\f(\g_{m_j})}}{|\g_{m_j}|^{2/p}} \| g_{k_j} \|_{\F^p_{\a}} \asymp e^{\f(\g_{m_j}) - \f(\l_{k_j})} \asymp e^{\frac{1}{4 \a}\left( (m_j + 2/p)^2 - (k_j + \de_{k_j} + 2/p)^2 \right)}.
            \end{equation*}
            Combining these two estimates, we get
            \begin{equation} \label{eq: de to infty contradiction two-sided}
                e^{\frac{1}{4 \a} (m_j - k_j - \de_{k_j})(k_j - m_j - \de_{k_j} - 4/p)} \lesssim 1.
            \end{equation}
            By choosing a subsequence of $\de_{k_j}$, we can assume that $\de_{k_j} \to \infty$ or $\de_{k_j} \to -\infty$. In the case $\de_{k_j} \to \infty$ we get $k_j - m_j - \de_{k_j} - 4/p \to -\infty$. So, it is enough to decrease all $m_j$ by one sufficiently large integer to get $m_j - k_j - \de_{k_j} \le -1$ to arrive at a contradiction to \eqref{eq: de to infty contradiction two-sided}. Similarly, in the case $\de_{k_j} \to -\infty$, we get $k_j - m_j - \de_{k_j} - 4/p \to \infty$. So, increasing all $m_j$ by one sufficiently large integer to get $m_j - k_j - \de_{k_j} \ge 1$ gives us a contradiction to \eqref{eq: de to infty contradiction two-sided}. We conclude that $\de_k$ has to be bounded, which is exactly property \ref{pr: 2 two-sided}.

        \subsubsection{Property \ref{pr: 3 two-sided}}
            Finally, let us prove property \ref{pr: 3 two-sided}. Suppose that it does not hold, i.e., for every $N > 0$ 
            \begin{equation*}
                \sup_{n \in \Z} \left| \frac{1}{N} \sum_{k = n}^{n + N - 1} \de_k \right| = \frac 1 2 + \e_N,     
            \end{equation*}
            with $\e_N \ge 0$. Let us again consider operator $T$ as in \eqref{eq: T two-sided}.
            Since $\L$ is complete interpolating, $T$ must be continuous. Moreover, because we established property \ref{pr: 2 two-sided}, the estimate \eqref{eq: Tmk estimate two-sided} holds true, i.e.,
            \begin{equation*}
                |T_{mk}| \asymp 
                \begin{cases}
                    1, & \quad m = k, \\
                    e^{\frac{1}{4 \a} \left( -(m - k) - 2 \sum_{j = k}^{m - 1} \de_j \right)}, & \quad m > k, \\
                    e^{\frac{1}{4 \a} \left( -(k - m) + 2 \sum_{j = m}^{k - 1} \de_j \right)}, & \quad m < k.
                \end{cases}
            \end{equation*}
            First, we show that $N \e_N$ must be bounded. Suppose that it is not; this means that there exists a sequence $N_j$ such that $N_j \e_{N_j} \to \infty$. By the definition of $\e_N$ we can find $n_j$ such that
            \begin{equation*}
                \left| \frac{1}{N_j} \sum_{k = n_j}^{n_j + N_j - 1} \de_k \right| \ge \frac 1 2 + \e_{N_j} - \frac{1}{N_j}.        
            \end{equation*}
            Considering $m_j = n_j$ and $k_j = n_j + N_j$, we get
            \begin{equation*}
                \max(|T_{m_jk_j}|, |T_{k_jm_j}|) \gtrsim e^{\frac{1}{4 \a} \left( - N_j + 2 |\sum_{r = n_j}^{n_j + N_j - 1} \de_r| \right)} \ge e^{\frac{N_j \e_{N_j} - 1}{2 \a}} \to \infty. 
            \end{equation*}
            This means that in the matrix $(T_{mk})_{m, k \in \Z}$ there is a sequence of elements that grows unboundedly, in contradiction to the fact that $T$ must be a continuous operator on $\ell^p(\Z)$. We conclude that $N \e_N$ has to be bounded.
            It remains to see that we cannot have $\e_N \ge 0$ for all $N$. Consider a sequence $N_j \to \infty$, we can find $n_j$ such that
            \begin{equation*}
                \left| \frac{1}{N_j} \sum_{k = n_j}^{n_j + N_j - 1} \de_k \right| \ge \frac 1 2 + \e_{N_j} - \frac{1}{N_j}.  
            \end{equation*}
            If we have
            \begin{equation*}
                \frac{1}{N_j} \sum_{k = n_j}^{n_j + N_j - 1} \de_k \ge \frac 1 2 + \e_{N_j} - \frac{1}{N_j},
            \end{equation*}
            then for $0 < K < N_j$ we get
            \begin{equation*}
                \sum_{r = n_j + K}^{n_j + N_j - 1} \de_r \ge \sum_{r = n_j}^{n_j + N_j - 1} \de_r - \left| \sum_{r = n_j}^{n_j + K - 1} \de_r \right| \ge \frac{N_j - K}{2} + N_j \e_{N_j} - K \e_K - 1.
            \end{equation*}
            Thus,
            \begin{equation*}
                |T_{mk}| \gtrsim e^{\frac{1}{2 \a} \left( N_j \e_{N_j} - K \e_{K} - 1 \right)} \gtrsim 1,
            \end{equation*}
            for $k = n_j + N_j$ and $m = n_j + K$ with $0 < K < N_j$, i.e., we get a part of the column of $(T_{mk})_{m, k \in \Z}$ with elements bounded away from $0$. Similarly, if 
            \begin{equation*}
                \frac{1}{N_j} \sum_{k = n_j}^{n_j + N_j - 1} \de_k \le -\frac 1 2 - \e_{N_j} + \frac{1}{N_j},
            \end{equation*}
            we get 
            \begin{equation*}
                |T_{mk}| \gtrsim e^{\frac{1}{2 \a} \left( N_j \e_{N_j} - K \e_{K} - 1 \right)} \gtrsim 1,
            \end{equation*}
            for $m = n_j + N_j$ and $k = n_j + K$ with $0 < K < N_j$, i.e., we get a part of the row of $(T_{mk})_{m, k \in \Z}$ with elements bounded away from $0$. In conclusion, we get increasingly long parts of the rows and columns of $(T_{mk})_{m, k \in \Z}$ with coefficients uniformly bounded away from $0$, which contradicts the continuity of $T$ on $\ell^p(\Z)$. This completes the proof of Theorem \ref{thrm: CIS characterisation for two-sided}. \hfill \qedsymbol

    \subsection*{Acknowledgment}
        The author expresses his gratitude to his advisors, Evgeny Abakumov and Alexander Borichev, for their constant support and guidance. He also thanks Yurii Belov for helpful comments and remarks.  
    
\printbibliography

@article{HypCh,
  title={Irregular sampling for hyperbolic secant type functions},
  author={Baranov, Anton and Belov, Yurii},
  journal={Adv. Math.},
  volume={458},
  pages={109981},
  year={2024},
  publisher={Elsevier}
}

@article{GaussFrames,
  title={Complete interpolating sequences for the Gaussian shift-invariant space},
  author={Baranov, Anton and Belov, Yurii and Gr{\"o}chenig, Karlheinz},
  journal={Appl. Comput. Harmon. Anal.},
  volume={61},
  pages={191--201},
  year={2022},
  publisher={Elsevier}
}

@article{SmallFock,
  title={Sampling, interpolation and Riesz bases in small Fock spaces},
  author={Baranov, Anton and Dumont, Andr{\'e} and Hartmann, Andreas and Kellay, Karim},
  journal={J. Math. Pures Appl.},
  volume={103},
  number={6},
  pages={1358--1389},
  year={2015},
  publisher={Elsevier}
}

@article{FockType,
  title={Riesz bases of reproducing kernels in Fock-type spaces},
  author={Borichev, Alexander and Lyubarskii, Yurii},
  journal={J. Inst. Math. Jussieu},
  volume={9},
  number={3},
  pages={449--461},
  year={2010},
  publisher={Cambridge University Press}
}

@article{OmariCISb,
  title={Riesz bases of reproducing kernels in small Fock spaces},
  author={Kellay, Karim and Omari, Youssef},
  journal={J. Fourier Anal. Appl.},
  volume={26},
  number={1},
  pages={17},
  year={2020},
  publisher={Springer}
}

@article{OmariCIS,
  title={Complete interpolating sequences for small Fock spaces},
  author={Omari, Youssef},
  journal={J. Funct. Anal.},
  volume={281},
  number={5},
  pages={109064},
  year={2021},
  publisher={Elsevier}
}

@article{SeipBargFock,
  title={Density theorems for sampling and interpolation in the Bargmann-Fock space I.},
  author={Seip, Kristian},
  journal={J. reine angew. Math},
  volume={429},
  number={1992},
  pages={91--106},
  year={1992},
  publisher={Walter de Gruyter, Berlin/New York Berlin, New York}
}

@article{SeipBargFock2,
  title={Density theorems for sampling and interpolation in the Bargmann-Fock space. II},
  author={Seip, Kristian and Wallst{\'e}n, Robert},
  journal={J. reine angew. Math},
  volume={429},
  number={1992},
  pages={107--114},
  year={1992}
}

@article{avdonin1977riesz,
  title={Riesz bases of exponential functions in $L^2$},
  author={Avdonin, Sergei A},
  journal={J. Sov. Math.},
  volume={8},
  number={1},
  pages={130--131},
  year={1977},
  publisher={Springer}
}

@inproceedings{kadets1964exact,
  title={The exact value of the Paley--Wiener constant},
  author={Kadets, Mikhail Iosifovich},
  booktitle={Dokl. Akad. Nauk SSSR},
  volume={155},
  number={6},
  pages={1253--1254},
  year={1964},
}

@article{lyubarskii1997complete,
  title={Complete interpolating sequences for Paley-Wiener spaces and Muckenhoupt’s (Ap) condition},
  author={Lyubarskii, Yurii and Seip, Kristian},
  journal={Rev. Mat. Iberoam.},
  volume={13},
  year={1997}
}

@article{eoff1995discrete,
  title={The discrete nature of the Paley-Wiener spaces},
  author={Eoff, Carolyn},
  journal={Proc. Amer. Math. Soc.},
  volume={123},
  number={2},
  pages={505--512},
  year={1995}
}

@article{borichev2007sampling,
  title={Sampling and interpolation in large Bergman and Fock spaces},
  author={Borichev, Alexander and Dhuez, Remi and Kellay, Karim},
  journal={J. Funct. Anal.},
  volume={242},
  number={2},
  pages={563--606},
  year={2007},
  publisher={Elsevier}
}

@article{marco2003interpolating,
  title={Interpolating and sampling sequences for entire functions},
  author={Marco, Nicolas and Massaneda, Xavier and Ortega-Cerd{\`a}, Joaquim},
  journal={Geom. Funct. Anal.},
  volume={13},
  number={4},
  pages={862--914},
  year={2003},
  publisher={Springer}
}

@article{baranov2017fock,
  title={Fock type spaces with Riesz bases of reproducing kernels and de Branges spaces},
  author={Baranov, Anton and Belov, Yurii and Borichev, Alexander},
  journal={Studia Math.},
  volume={236},
  pages={127--142},
  year={2017}
}

@article{lyubarskii1994sampling,
  title={Sampling and interpolation of entire functions and exponential systems in convex domains},
  author={Lyubarskii, Yurii and Seip, Kristian},
  journal={Ark. Mat.},
  volume={32},
  number={1},
  pages={157--193},
  year={1994},
  publisher={Kluwer Academic Publishers Dordrecht}
}

@article{Ortega1995OnInterpolation,
  title = {On interpolation and sampling in Hilbert spaces of analytic functions.},
  author = {J. Ortega Cerdà and Bo Berndtsson},
  pages = {109--128},
  volume = {1995},
  number = {464},
  journal = {J. reine angew. Math},
  year = {1995},
}
        
\end{document}